\definecolor{darkblue}{rgb}{0,0.1,.5}
\newcommand{\CP}{\mathbb{C}P^{\infty}}
\newcommand{\Z}{\mathcal{Z}}
\newcommand{\R}{\mathcal{R}}
\DeclareMathOperator{\Cobar}{Cobar}
\DeclareMathOperator{\Cotor}{Cotor}
\DeclareMathOperator{\lk}{lk}
\DeclareMathOperator{\st}{st}
\DeclareMathOperator{\rank}{rank}
\newcommand{\hr}[2][]{\hyperref[#2]{#1~\ref{#2}}}
\newtheorem{theorem}{Theorem}[section]
\newtheorem{proposition}[theorem]{Proposition}
\newtheorem{lemma}[theorem]{Lemma}
\theoremstyle{remark}
\theoremstyle{definition}
\newtheorem{example}[theorem]{Example}
\title{One-relator groups and algebras related to polyhedral products}
\author{Jelena Grbi{\'{c}}}
\address{School of Mathematical Sciences, University of Southampton, UK}
\email{j.grbic@soton.ac.uk}
\author{Marina Ilyasova}
\address{Department of Mathematics and Mechanics, Moscow State University, Russia}
\email{marina\_ilyasova@bk.ru}
\author{Taras Panov}
\address{Department of Mathematics and Mechanics, Moscow
State University, Russia;\newline
Institute for Information Transmission Problems, Russian Academy of Sciences, Moscow, Russia;\newline
Scientific Educational Mathematical Centre, Kazan Federal Univerisity, Russia}
\email{tpanov@mech.math.msu.su}
\author{George Simmons}
\address{School of Mathematical Sciences, University of Southampton, UK}
\email{g.j.h.simmons@soton.ac.uk}
\subjclass[2010]{Primary 20F55, 20F65, 55P15; Secondary 05E45, 57M07, 57T30}
\keywords{One-relator groups and algebras, commutator subgroups of right-angled Coxeter groups, moment-angle complexes, flag complexes}
\thanks{The third author was supported by the Russian Foundation for Basic Research grant no.~20-01-00675. The authors thank the Fields Institute for Research in Mathematical Sciences for the opportunity to work on this research project during the Thematic Program on Toric Topology and Polyhedral Products.}
\begin{document}

\maketitle

\begin{abstract}
We link distinct concepts of geometric group theory and homotopy theory through underlying combinatorics. For a flag simplicial complex~$K$, we specify a necessary and sufficient combinatorial condition for the commutator subgroup $RC_K'$ of a right-angled Coxeter group, viewed as the fundamental group of the real moment-angle complex $\mathcal{R}_K$, to be a one-relator group; 
and for the Pontryagin algebra $H_{*}(\Omega \mathcal{Z}_K)$ of the moment-angle complex to be a one-relator algebra. We also give a homological characterisation of these properties. For $RC_K'$, it is given by a condition on the homology group $H_2(\mathcal{R}_K)$, whereas for $H_{*}(\Omega \mathcal{Z}_K)$ it is stated in terms of the bigrading of the homology groups of~$\mathcal{Z}_K$.
\end{abstract}

\section{Introduction}

Let $K$ be a flag simplicial complex on vertex set $[m]=\{1,\ldots,m\}$ and $K^1$ be its $1$-skeleton. The \emph{right-angled Coxeter group} corresponding to $K$ is defined as the group $RC_K$ with generators $g_1,\ldots,g_m$ for each vertex in $K$ and relations $g_i^2=1$ and $g_ig_j = g_jg_i$ whenever $\{i,j\} \in K^1$. Right-angled Coxeter groups are interesting from a geometric point of view because they arise from reflections in the facets of right-angled polyhedra in hyperbolic space.

For a given group $G$, we denote by $G'$ the commutator subgroup of~$G$. The \emph{real moment-angle complex} $\R_K = (D^1, S^0)^K$ associated with a flag complex $K$ is a finite-dimensional aspherical space whose fundamental group is the commutator subgroup~$RC_{K}'$ of the right-angled Coxeter group $RC_{K}$.  In~\cite{pa-ve16} it was shown that $RC_K'=\pi_1 \left(\R_K \right)$ is free if and only if $K^1$ is a chordal graph. A graph is called \emph{chordal} if each of its cycles with $4$ or more vertices has a chord, an edge joining two vertices that are not adjacent in the cycle. Furthermore, for arbitrary flag $K$, a minimal generating set for $RC_K'$  was given in terms of iterated commutators of the generators of~$RC_{K}$~\cite[Theorem~4.5]{pa-ve16}.

Another space associated with a simplicial complex $K$ is the \emph{moment-angle complex} $\Z_K=(D^2,S^1)^K$. Throughout this paper, all homology groups are considered with coefficients in $\mathbb{Z}$, unless otherwise stated. The Pontryagin algebra $H_{\ast}(\Omega \Z_K)$, 
was studied in~\cite{g-p-t-w16} when $K$ is a flag complex. It was shown that $H_{\ast}(\Omega \Z_K)$ is a graded free associative algebra if and only if the 1-skeleton $K^1$ is a chordal graph. Furthermore, a minimal generating set for $H_{\ast}(\Omega \Z_K)$ with flag $K$ was given in~\cite[Theorem~4.3]{g-p-t-w16} in terms of iterated commutators. 

Therefore, for both $\R_K$ and $\Z_K$ the algebraic freeness property, that is, that $\pi_1(\R_K)$ and $H_{\ast}(\Omega \Z_K)$ are free as groups and algebras, respectively, is characterised by the same combinatorial condition. More precisely, these algebraic objects are free if and only if the 1-skeleton $K^1$ of the simplicial complex $K$ is a chordal graph. 
The question of $H_*(\Omega\Z_K)$ being a free associative algebra is related to the Golodness property of a simplicial complex $K$. A simplicial complex $K$ is \emph{Golod} if all cup products and higher Massey products vanish in $H^*(\Z_K)$. In~\cite[Theorem~4.6]{g-p-t-w16} it was proved that a flag simplicial complex $K$ is Golod if and only if $K^1$ is a chordal graph. 

In this paper we study other properties of objects naturally arising in geometric group theory and  homotopy theory that have the same combinatorial characterisation. In particular, we describe a combinatorial condition on a flag complex $K$ under which $\pi_1(\R_K)$ is a one-relator group, and $H_{\ast}(\Omega \Z_K)$ is a one-relator algebra. A 1-dimensional simplicial complex $C_p$ that is the boundary of a $p$-gon is called a $p${\it-cycle}.  In~\cite{g-p-t-w16} it was shown that when $K$ is a $5$-cycle then there is only one relation between the $10$ multiplicative generators of $H_{\ast}(\Omega \Z_K)$; while in~\cite{vere16}, a single relation was again found between the $34$ multiplicative generators of $H_{\ast}(\Omega \Z_K)$ when $K$ is a $6$-cycle. Similarly, in~\cite{pa-ve16}, it was noted that if $K$ is a $p$-cycle for $p \geqslant 4$, then $\pi_1(\R_K)$ is a one-relator group. The one-relator condition places strong restrictions on the form of $K$, and our main combinatorial characterisation is the following.

\begin{theorem}\label{mainthm} Let $K$ be a flag simplicial complex. Then
 $\pi_1(\R_K)$ and $H_{\ast}(\Omega \Z_K)$ have exactly one relation if and only if the following combinatorial condition holds
\begin{equation} \label{eq:starcond}
    K = C_p\text{ or } K = C_p \ast \Delta^q \; \text{ for } p \geqslant 4,\; q \geqslant 0 \tag{$\ast$}
\end{equation}
where $C_p$ is a $p$-cycle, $\Delta^q$ is a $q$-simplex and $*$ denotes the join of simplicial complexes.
\end{theorem}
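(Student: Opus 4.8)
The plan is to reduce both one-relator conditions to a single combinatorial statement about the first homology of full subcomplexes, and then to classify the flag complexes satisfying it. Write $K_J$ for the full subcomplex of $K$ on a vertex subset $J\subseteq[m]$. I would first record the two homological characterisations promised by the abstract. On the group side, since $\R_K$ is aspherical with $\pi_1=RC_K'$, Hopf's formula together with the explicit commutator generating set of \cite[Theorem~4.5]{pa-ve16} realises a basis of the free abelian group $H_1(\R_K)$, so the relators lie in the commutator subgroup and the minimal number of relators equals $\rank H_2(\R_K)$; the real analogue of Hochster's formula then gives
\begin{equation*}
H_2(\R_K)\;\cong\;\bigoplus_{J\subseteq[m]}\tilde H_1(K_J).
\end{equation*}
On the algebra side, the Eilenberg--Moore (bar) identification $\Tor^{H_*(\Omega\Z_K)}(\mathbb Z,\mathbb Z)\cong H_*(\Z_K)$ together with Hochster's formula matches the homological degree in $\Tor$ with the homological degree in the subcomplexes $K_J$, so that the generators are governed by $\bigoplus_J\tilde H_0(K_J)$ (recovering \cite[Theorem~4.3]{g-p-t-w16}) and the number of relations, the $\Tor_2$ part, by the same group $\bigoplus_J\tilde H_1(K_J)$. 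Thus both properties hold precisely when $\bigoplus_J\tilde H_1(K_J)\cong\mathbb Z$, and the theorem reduces to classifying flag $K$ with this property.

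For the \emph{if} direction I would verify the homological condition directly. If $K=C_p\ast\Delta^q$, then for $J=A\sqcup B$ with $A\subseteq[p]$ and $B$ a nonempty subset of the simplex vertices, $K_J=(C_p)_A\ast\Delta^{|B|-1}$ is a cone and hence acyclic, while for $B=\emptyset$ the subcomplex $(C_p)_A$ is a forest unless $A=[p]$. Hence the only $J$ contributing to $\bigoplus_J\tilde H_1(K_J)$ is $J=[p]$, with $\tilde H_1(C_p)=\mathbb Z$, so the group is $\mathbb Z$; by the first paragraph $RC_K'$ and $H_*(\Omega\Z_K)$ each have exactly one relation (the case $K=C_p$ is $q$ absent). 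This is reinforced geometrically by $\R_{K\ast\Delta^q}\simeq\R_K$ and $\Z_{K\ast\Delta^q}\simeq\Z_K$, which show that joining with a simplex changes neither $\pi_1(\R_K)$ nor $H_*(\Omega\Z_K)$ and explains the appearance of $\Delta^q$ in \eqref{eq:starcond}.

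The substance is the \emph{only if} direction. Assume $\bigoplus_J\tilde H_1(K_J)\cong\mathbb Z$. By \cite{pa-ve16,g-p-t-w16} non-vanishing forces $K^1$ to be non-chordal, so it contains a chordless cycle; as $K$ is flag this cycle is a full subcomplex $K_A=C_p$ with $p\geqslant4$, contributing a $\mathbb Z$ summand. Since the total rank is $1$, the set $A$ is the \emph{unique} vertex subset with $\tilde H_1(K_A)\neq0$ and $\tilde H_1(K_A)=\mathbb Z$, while every other $J$ has $\tilde H_1(K_J)=0$. Now take $v\notin A$. Vanishing of $\tilde H_1(K_{A\cup\{v\}})$ forces $v$ to be adjacent to all of $A$: if $v$ were adjacent to none then $K_{A\cup\{v\}}\simeq C_p\sqcup\{\mathrm{pt}\}$ would still have $\tilde H_1=\mathbb Z$; if $v$ were adjacent to a proper nonempty set of cycle vertices then coning over that part leaves a nontrivial loop, so again $\tilde H_1(K_{A\cup\{v\}})\neq0$ at a subset different from $A$; either way this contradicts uniqueness. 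Hence every $v\notin A$ is a cone vertex, adjacent to all of $A$. Finally, two non-adjacent cone vertices $v,w$ together with a non-adjacent pair $a,a'\in C_p$ (which exists as $p\geqslant4$) give an induced four-cycle $K_{\{v,a,w,a'\}}=C_4$ with $\tilde H_1=\mathbb Z$ on a subset $\neq A$, a contradiction; so the cone vertices form a clique, spanning a simplex $\Delta^q$ by flagness. Therefore $K=C_p$ or $K=C_p\ast\Delta^q$.

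The main obstacle I anticipate is twofold. First, making the two homological characterisations precise over $\mathbb Z$: I must verify that ``exactly one relation'' is genuinely detected by $\rank H_2(\R_K)$ and by the rank of the $\Tor_2$ bigraded component, and that neither group carries torsion that would distort the count; this relies on the commutator generating sets of \cite{pa-ve16,g-p-t-w16} and on the relators lying in the respective commutator sub-object, so that the minimal relation count is not lowered. Second, in the \emph{only if} direction the case analysis of how an external vertex attaches to the cycle must be exhaustive and correctly organised around the observation that distinct subsets $J$ contribute \emph{independent} summands to $\bigoplus_J\tilde H_1(K_J)$; I must also confirm that rank one of this group simultaneously forces $\tilde H_i(K_J)=0$ for all $i\geqslant2$, so that the combinatorial conditions extracted from $RC_K'$ and from $H_*(\Omega\Z_K)$ coincide and both are exactly condition~\eqref{eq:starcond}.
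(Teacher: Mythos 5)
Your combinatorial core (the third paragraph: extracting a chordless cycle $K_A=C_p$, forcing every outside vertex to cone over it, and forcing the cone vertices to form a clique via an induced $4$-cycle) is sound and matches the paper's implication (b)$\Rightarrow$(c) in both Theorem~\ref{thm} and Theorem~\ref{thm:pgoniffORA} almost verbatim. The genuine gap is in your first paragraph, i.e.\ in both bridges from ``exactly one relation'' to the homological condition, which is precisely where the paper's machinery lives. On the group side, the claim that Hopf's formula makes the minimal number of relators equal to $\rank H_2(\R_K)$ is false as a general principle: $\rank H_2(G)$ is only a lower bound for the relator count (the Baumslag--Solitar group $\langle x,y\mid xyx^{-1}y^{-2}\rangle$ is a non-free one-relator group with free abelian $H_1$ and $H_2=0$, and non-efficient groups show the bound need not be attained), and a hypothetical one-relator presentation of $RC_K'$ need not use the Panov--Veryovkin generators, so you cannot conclude the relator lies in $[F,F]$. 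The paper's actual argument for (a)$\Rightarrow$($H_2=\mathbb{Z}$) is the Lyndon Identity Theorem, which applies only after observing that $RC_K'=\pi_1(\R_K)$ is torsion-free (being the fundamental group of a finite aspherical complex), hence the relator is not a proper power and the presentation complex $Y(G)$ is a $K(G,1)$; without asphericity, Hopf's theorem only gives that $H_2(G)$ is a quotient of $H_2(Y(G))\in\{\mathbb{Z},0\}$, possibly torsion. Conversely, your ``if'' direction never produces a one-relator presentation: $H_2(\R_K)=\mathbb{Z}$ alone does not yield one. The paper gets it geometrically, from the homeomorphism $\R_{C_p\ast\Delta^q}\cong S_g\times D^{q+1}$ with $S_g$ a closed orientable surface of genus $(p-4)2^{p-3}+1$, whose fundamental group is a one-relator surface group; nothing in your proposal substitutes for this.

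The same problem recurs on the algebra side. The identification $\Tor^{H_\ast(\Omega\Z_K)}(\mathbb{Z},\mathbb{Z})\cong H_\ast(\Z_K)$, with $\Tor_1$ counting generators and $\Tor_2$ counting relations, is asserted rather than proved: the bar (Eilenberg--Moore) spectral sequence need not collapse, and over $\mathbb{Z}$ the ranks of $\Tor$ do not automatically compute minimal numbers of generators of a two-sided relation ideal (torsion in $\tilde H_1(K_J)$ is possible even for flag $K$, e.g.\ flag triangulations of $\mathbb{R}P^2$, so even the rank bookkeeping is delicate). The paper deliberately avoids this route: (c)$\Rightarrow$(a) goes through McGavran's homeomorphism~\eqref{eq:McGavran} and the Adams--Hilton computation of Proposition~\ref{prop:HopfAlgIso}, exhibiting $H_\ast(\Omega\Z_K)$ explicitly as $T(a_1,b_1,\ldots,a_k,b_k)/\langle\sum_i[a_i,b_i]\rangle$; and (a)$\Rightarrow$(c) is a direct topological argument that never passes through the bigraded condition (b) --- it combines the retraction Lemma~\ref{lm:DistinctRetract} with the half-smash computation of Lemma~\ref{lm:HalfSmashGivesRelator}, which produces two independent relations (boundaries $d(z)$ and $d(w)$ in the Adams--Hilton model of $M\rtimes S^1$) in each of the four cases of how an outside vertex can attach to the cycle. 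In the paper, (b) is derived from (c), not used as a bridge from (a). You flagged exactly this reduction as your ``main obstacle''; it is not a technicality to be checked but the actual content of the theorem, and as proposed it rests on one false general principle and one unestablished spectral sequence collapse.
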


For $\R_K$ this is proved in Theorem~\ref{thm} and for $\Z_K$ in Theorem~\ref{thm:pgoniffORA}. 
The proofs of Theorem~\ref{thm} and Theorem~\ref{thm:pgoniffORA} are completely different in character. For Theorem~\ref{thm}, the key argument comes from geometric group theory. When $K=C_p$ or $K=C_p*\Delta^q$ for $q\geqslant 0$, the space $\R_K$ is homeomorphic to the product $S_g \times D^{q+1}$, where $S_g$ is a closed orientable surface of genus $g = (p-4)2^{p-3}+1$ and $D^{q+1}$ is a $(q+1)$-dimensional disc, and therefore its fundamental group is a one-relator surface group. The converse statement is proved using the Lyndon Identity Theorem~\cite{lynd50} (see~\cite[Theorem~2.1]{dy-va73}) because the group $\pi_1(\R_K) = RC_{K}'$ is torsion-free.

To prove Theorem~\ref{thm:pgoniffORA}, we study the simply connected space $\Omega\Z_K$ using homotopy-theoretical methods. When $K=C_p$ or $K=C_p*\Delta^q$ for $q\geqslant 0$,
by a result of McGavran \cite{mcga79}, there is a homotopy equivalence 
\[
\Z_{K} \simeq \#_{k=3}^{p-1} (S^k \times S^{p+2-k})^{\# (k-2) {\genfrac(){0pt}{1}{p-2}{k-1}}}
\]
where $\#$ denotes the connected sum operation on manifolds. 
Beben and Wu~\cite{be-wu15} computed the algebra $H_{\ast}(\Omega X; \mathbb{Z}_p)$, $p$ prime, where $X$ is a highly-connected manifold obtained by attaching a single cell to a space $Y$ which has the homotopy type of a double suspension. This implies that $H^{\ast}(Y; \mathbb{Z}_p)$ has no non-trivial cup products, which places sufficient restrictions on $H^{\ast}(X; \mathbb{Z}_p)$ so that $H_{\ast}(\Omega X; \mathbb{Z}_p)$ can be studied via a homology Serre spectral sequence. %, which carries with it the structure of a left $H_{\ast}(\Omega X; \mathbb{Z}_p)-$module. 
We adapt the Beben--Wu method to study the integral Pontryagin algebra of an arbitrary connected sum of sphere products 
\begin{equation} \label{eq:ConnectedSum} 
M = \#_{i=1}^k (S^{d_i} \times S^{d-d_i})
\end{equation} 
where $d_i \geqslant 2$ and $d\geqslant 4$.
In this case, the Beben--Wu method reduces to the Adams--Hilton model and the highly-connectedness assumption can be dropped. In Proposition~\ref{prop:HopfAlgIso}, we prove that the integral Pontryagin algebra $H_{\ast}(\Omega M)$ is isomorphic as a Hopf algebra to the quotient of a graded free associative algebra by a single relation. Proposition \ref{prop:HopfAlgIso} implies that when $K=C_p$ or $K=C_p*\Delta^q$ for $q\geqslant 0$, $H_{\ast}(\Omega \Z_K)$ is a one-relator algebra. We compute the Poincar{\'e} series $P(H_{\ast}(\Omega \Z_K);t)$ explicitly in Proposition~\ref{prop:PoincareSeries}.

We extend the equivalences of Theorem~\ref{mainthm} by determining an equivalent homological criteria on $\R_K$ and $\Z_K$. For $\R_K$, the combinatorial condition~\eqref{eq:starcond} is equivalent to the homological condition $H_2(\R_K) = \mathbb{Z}$, and this is proved in Theorem~\ref{thm}. The homology groups of $\Z_K$ have a natural bigrading, see~\cite[\S~4.4]{bu-pa15}. The combinatorial condition~\eqref{eq:starcond} is then equivalent to the homological condition 
\[
H_{2-j,2j}(\Z_K) =\begin{cases} \mathbb{Z}&\text{if }j=p
\\0&\text{otherwise.}
\end{cases}
\]
This is proved in Theorem~\ref{thm:pgoniffORA}.

Although the homotopy type of a moment-angle complex $\Z_K$ is not accessible in general, various homotopy-theoretical concepts can be described if $K$ is a flag complex. Moreover, many of these homotopy-theoretical characterisations of $\Z_K$ are equivalent. For example, for $K$ a flag complex, $\Z_K$ having the homotopy type of a wedge of spheres is equivalent to $\Z_K$ being a co-H space and these concepts are equivalent to $K$ being Golod. In this paper, we show that $H_{\ast}(\Omega \Z_K)$ is a one-relator algebra if and only if $\Z_K$ has the homotopy type of a connected sum of sphere products, with two spheres in each product. Additionally, these properties are closely related to $K$ being minimally non-Golod, see~\cite{limo15}, and we summarise this relationship in Proposition~\ref{flagmng}.

We note that despite the similarity of the results for the moment-angle complex $\Z_K$ and its real analogue $\R_K$ when $K$ is flag, the techniques used in proofs differ significantly. For the case of $\Z_K$, homotopy-theoretical methods are more prevalent, whereas the case of $\R_K$ requires the use of methods in combinatorial and geometric group theory. Given a homotopy-theoretical result related to $\Z_K$, one could predict the corresponding group-theoretical result for $\R_K$, but it is an open and challenging problem to find a systematic way of translating these results directly. This is a problem of interest to both topologists and group theorists.

For non-flag $K$, all homotopy-theoretical characterisations of moment-angle complexes $\Z_K$ are more complex. For example, for an arbitrary Golod complex $K$, the moment-angle complex $\Z_K$ is not necessarily a co-H space~\cite{ir-ya17} and its cohomology can contain torsion~\cite{g-p-t-w16}. Furthermore, describing the Pontryagin algebra $H_*(\Omega\Z_K)$, and in particular determining the class of $K$ for which it is a free or one-relator algebra, is considerably harder in the non-flag case. The problem of determining those $K$ which are Golod or minimally non-Golod is also more involving, and in general distinct from studying $H_{\ast}(\Omega \Z_K)$. At the end of Section~5 we expand on the distinction between the properties of $K$ being minimally non-Golod and $H_*(\Omega\Z_K)$ being a one-relator algebra. This complexity is also seen in the real case. In the non-flag case, the real moment-angle complex $\R_K$ is not aspherical, so its topology is not determined by its fundamental group. Therefore, the question of describing $H_*(\Omega\R_K)$ does not lie entirely within combinatorial group theory.

\section{Preliminaries}
	 
Let $K$ be a \textit{simplicial complex} on the set $[m] =\{1,2,\dots,m\}$, that is, $K$ is a collection of subsets $I \subseteq [m]$ such that for any $I \in K$ all subsets of $I$ also belong to $K$. We always assume that $K$ contains $\emptyset$ and all singletons $\{i\}\in[m]$.

Let
\[
 (\textbf{X},\textbf{A}) = \{(X_1,A_1),\dots,(X_m,A_m)\}
 \]
be a sequence of $m$ pairs of pointed topological spaces, $pt \in A_i \subseteq X_i$. For each subset $I \subseteq [m]$, we set 
\[
 (\textbf{X},\textbf{A})^I = \Bigl\{ (x_1,\dots,x_m) \in \prod_{k=1}^m X_k \mid x_k \in A_k \text{ for } k \not\in I \Bigr\}
\]
and define the \textit{polyhedral product} of $(\textbf{X},\textbf{A})$ over the complex $K$ as
\[
(\textbf{X},\textbf{A})^K = \bigcup_{I \in K} (\textbf{X},\textbf{A})^I =\bigcup_{I \in K} \biggl( \prod_{i \in I} X_i \times \prod_{i \not\in I} A_i \biggr) \subseteq \prod_{k=1}^m X_k. 
\]

In the case when $X_i = X$ and $A_i = A$ for all $i$ we use the notation $(X,A)^K$ for $(\textbf{X},\textbf{A})^K$.

\begin{example}\label{rmac}\

\noindent\textbf{1.}
Let $(X,A) = (D^1, S^0)$, where $D^1$ is the closed interval $[-1,1]$ and $S^0$ is its boundary $\{-1,1\}$. The polyhedral product $(D^1, S^0)^K$ is known as the \textit{real moment-angle complex} and is denoted by $\R_K$,
$$ \R_K = (D^1, S^0)^K = \bigcup_{I \in K} (D^1, S^0)^I. $$
Note that $\R_K$ is a cubic subcomplex in the cube $(D^1)^m=[-1,1]^m$.

\smallskip

\noindent\textbf{2.}
Let $(X,A) = (D^2, S^1)$, where $D^2$ is the closed unit disc %in $\mathbb{C}$ 
and $S^1$ is its boundary. The polyhedral product $(D^2,S^1)^K$ is known as the \emph{moment-angle complex} and is denoted by $\Z_K$. If $D^2$ is considered as a $CW$-complex with one cell in each dimension zero, one and two, then the moment-angle complex $\Z_K$ is a $CW$-subcomplex of the $CW$-product complex $(D^2)^m$.

\smallskip

%\item[3.] 
\noindent\textbf{3.}
Let $(X,A) = (\CP,pt)$. The polyhedral product $(\CP,pt)^K$ is known as the \textit{Davis--Januszkiewicz space} and is denoted by $DJ_K$. 
%\end{itemize}
\end{example}

For any subset $J \subseteq [m]$, the corresponding \textit{full subcomplex} of
$K$ is defined by
\[
  K_J = \{I \in K | I \subseteq J \}.
\]

The homology groups of the moment-angle complex $\Z_K$ have a natural bigrading arising from the bigrading in the CW-structure of~$\Z_K$, see~\cite[\S4.4]{bu-pa15},
\begin{equation}\label{bghom}
  H_k(\Z_K) \cong \bigoplus_{-i+2j=k}H_{-i,2j}(\Z_K).
\end{equation}

The bigraded components $H_{-i,2j}(\Z_K)$ can be described through the reduced simplicial homology groups of full subcomplexes~$K_J$ using Hochster's theorem, see~\cite[Theorem~4.5.8]{bu-pa15},
\begin{equation} \label{BigradedHochster}
H_{-i,2j}(\Z_K) \cong \bigoplus_{J \subseteq [m],\, |J|=j} \widetilde{H}_{j-i-1}(K_J).
\end{equation}

Similarly, the homology groups of the real moment-angle complex $\R_K$ are given by
\begin{equation}\label{homodecmop}
  H_k(\R_K) \cong \bigoplus_{J \subseteq [m]} \widetilde{H}_{k-1}(K_J)
\end{equation}
for any $k \geqslant 0$,  see~\cite[\S4.5]{bu-pa15}.

A \textit{missing face} of $K$ is a subset $I \subseteq [m]$ such that $I$ is not a simplex of $K$, but every proper subset of $I$ is a simplex of $K$. A simplicial complex $K$ is called a \textit{flag complex} if each of its missing faces consists of two vertices, that is, any set of vertices of $K$ which are pairwise connected by edges spans a simplex. A \textit{clique} of a graph $\Gamma$ is a subset $I$ of vertices pairwise connected by edges. For a graph $\Gamma$, we define the \textit{clique complex} of $\Gamma$ as the simplicial complex obtained by filling in each clique of~$\Gamma$ by a simplex. Each flag complex $K$ is the clique complex of its $1$-skeleton $\Gamma = K^1$.

A graph $\Gamma$ is called \textit{chordal} if each of its cycles with 4 or more vertices has a chord, an edge joining two vertices that are not adjacent in the cycle. A \textit{$p$-cycle} is the same as the boundary of a $p$-gon. It is a chordal graph only when $p=3$. The simplicial complex which is a $p$-cycle is denoted by $C_p$.

If $K = C_p$, $p\geqslant4$, by a result of McGavran \cite{mcga79}, there is a homeomorphism
\begin{equation} \label{eq:McGavran} 
\Z_{K} \cong \#_{k=3}^{p-1} (S^k \times S^{p+2-k})^{\# (k-2) {\genfrac(){0pt}{1}{p-2}{k-1}}}.
\end{equation}
The corresponding real moment-angle complex $\R_K$ is an orientable surface of genus $1+(p-4)2^{p-3}$, see~\cite[Proposition~4.1.8]{bu-pa15}.

The algebra $H_{\ast}(\Omega \Z_K)$ was studied in \cite{pa-ra04,g-p-t-w16}. The homotopy fibration $\Z_K\to DJ_K\to (\mathbb C P^\infty)^m$ gives rise to a short exact sequence of Hopf algebras
\begin{equation}\label{haes} \begin{tikzcd}
  1 \ar[r] & H_*(\Omega\Z_K) \ar[r] & H_*(\Omega DJ_K) \ar[r,"\mathrm{Ab}"] & 
  \Lambda[u_1, \ldots,u_m] \ar[r] & 0
\end{tikzcd} \end{equation} 
where $\mathrm{Ab}$ is the ``abelianisation'' homomorphism to the graded commutative algebra $\Lambda[u_1,\ldots,u_m]=H_*(\Omega(\mathbb C P^\infty)^m)$ with $\deg u_i=1$. The algebra 
$H_*(\Omega\Z_K)$ can be viewed as the commutator subalgebra of $H_{\ast}(\Omega DJ_K)$. Let $[a,b] = ab + (-1)^{\deg a \deg b +1} ba$ denote the graded Lie commutator of the elements $a$ and $b$. In the case that $K$ is flag, there is an algebra isomorphism~\cite[Theorem~9.3]{pa-ra04}
\begin{equation} \label{DJ_Kalg}
  H_{\ast}(\Omega DJ_K) \cong T(u_1,\dots,u_m) / \langle u_i^2,\; [u_i,u_j] \text{ if } \{i,j\} 
  \in K  \rangle
\end{equation}
where $T(u_1,\dots,u_m)$ is a graded free associative algebra and $\deg u_i = 1$. A minimal multiplicative generating set for $H_{\ast}(\Omega \Z_K)$ is given as in \cite[Theorem~4.3]{g-p-t-w16}. Namely, $H_{\ast}(\Omega \Z_K)$ is multiplicatively generated by $\sum_{J \subseteq [m]}\mathop{\mathrm{rank}}\widetilde{H}_{0}(K_J)$ iterated commutators of the form
\begin{equation} \label{MACgenerators}
 [u_j,u_i],~ [u_{k_1},[u_j,u_i]],~ \dots,~ [u_{k_1},[u_{k_2},\dots,[u_{k_{l-2}},[u_j,u_i]]\dots]]
\end{equation}
where $k_1<k_2<\dots<k_{l-2}<j>i$, $k_s\not=i$ for any $s$ and $i$ is the smallest vertex in a connected component of $K_{\{k_1,\dots,k_{l-2},j,i\}}$ not containing~$j$. Additionally, it was shown in \cite[Theorem~4.6]{g-p-t-w16} that $H_{\ast}(\Omega \Z_K)$ is a free associative algebra if and only if the graph $K^1$ is chordal, in which case $\Z_K$ is homotopy equivalent to a wedge of spheres.

Parallel results for the real moment-angle complex $\R_K$ were obtained in~\cite{pa-ve16} in the group-theoretical setting. Let $(g,h)=g^{-1}h^{-1}gh$ denote the group commutator of 
elements $g$ and $h$.

The \textit{right-angled Coxeter group} $RC_K$ corresponding to $K$ is defined by
\[ 
RC_K = F(g_1,\dots,g_m) / (g_i^2,~ (g_i,g_j) \text{ if } \{i,j\} \in K) 
\]
where $F(g_1,\dots,g_m)$ is a free group with $m$ generators.
Note that  $RC_{K}$ depends only on the $1$-skeleton $\Gamma=K^1$, which is a graph.

Recall that a path-connected space $X$ is \textit{aspherical} if $\pi_i(X)=0$ for $i \geqslant 2$. An aspherical space $X$ is an Eilenberg--Mac Lane space $K(\pi,1)$ with $\pi = \pi_1(X)$. The following facts relating the real moment-angle complex $\R_K$ to the right-angle Coxeter group~$RC_K$ are known, see, for example~\cite[Corollary~3.4]{pa-ve16}):
\begin{itemize}
	\item[(i)] $\pi_1(\R_K)$ is isomorphic to the commutator subgroup $RC'_{K}$;
	\item[(ii)] $\R_K$ is aspherical if and only if $K$ is flag.
\end{itemize}
Therefore, in the flag case the algebra $H_*(\Omega\R_K)$ reduces to the non-abelian group $H_0(\Omega\R_K)=RC'_K$ and the analogue of~\eqref{haes} is the short exact sequence of groups
\begin{equation*} \begin{tikzcd}
  1 \ar[r] & RC'_K \ar[r] & RC_K \ar[r,"\mathrm{Ab}"] & (\mathbb{Z}_2)^m \ar[r] & 0
\end{tikzcd} \end{equation*}
where $\mathbb{Z}_2$ is an elementary abelian $2$-group and $\mathrm{Ab}$ is the abelianisation homomorphism.

By analogy with~\cite{g-p-t-w16}, the following combinatorial criterion 
was obtained in~\cite[Corollary~4.4]{pa-ve16}: 
the commutator subgroup $RC'_{K}$ is a free group if and only if the graph $K^1$ is chordal.

An explicit minimal generator set for the commutator subgroup $RC'_{K}$ is described in~\cite[Theorem~4.5]{pa-ve16}. %\label{gen}
It consists of $\sum_{J \subseteq [m]}\mathop{\mathrm{rank}}\widetilde{H}_{0}(K_J)$ nested commutators
\begin{equation}\label{generators}
 (g_j,g_i),~~ (g_{k_1},(g_j,g_i)),~ \dots,~ (g_{k_1},(g_{k_2},\dots,(g_{k_{l-2}},(g_j,g_i))\dots))
\end{equation}
where $k_1<k_2<\dots<k_{l-2}<j>i$, $k_s\not=i$ for any $s$, and $i$ is the smallest vertex in a connected component of $K_{\{k_1,\dots,k_{l-2},j,i\}}$ not containing~$j$.

\section{One-Relator Groups}

A group $G$ is called \textit{a one-relator group} if $G$ is not a free group and can be presented with a generating set with a single relation.

Let $G$ be a one-relator group, that is, $G=F/R$, where $F=F(x_1,\dots,x_l)$ is a free group and $R$ is the smallest normal subgroup in~$F$ generated by relation~$r$. Consider the space
\begin{equation}\label{yg}
Y(G) = \Bigl(\bigvee_{i=1}^lS_i^1\Bigr) \cup_{\bar r} e^2
\end{equation}
obtained by attaching a $2$-cell  to a wedge of circles via a map $\bar r\colon S^1\rightarrow\bigvee S_i^1$ corresponding to the element~$r\in F$.% which in homology induces~$r$. 

Recall that all homology groups are considered with coefficients in $\mathbb{Z}$. The homology groups of $Y(G)$ are described as follows.

\begin{proposition}\label{prop}
	$H_k(Y(G))=0$ for $k\geqslant 3$, $H_1(Y(G))=\mathbb Z^l$ and
	\[
	H_2(Y(G))=
	\begin{cases}
	\mathbb{Z} & \text{if $r \in [F, F]$} \\
	0 & \text{otherwise.}
	\end{cases}
	\] \qed
\end{proposition}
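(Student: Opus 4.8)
The plan is to compute the (reduced) homology of the two-dimensional CW-complex $Y(G)$ directly from its cellular chain complex. The space $Y(G)$ has exactly one $0$-cell (the wedge point), $l$ one-cells (the circles $S_i^1$), and a single $2$-cell attached via $\bar r$. Thus the cellular chain complex is concentrated in degrees $0,1,2$:
\[
0 \longrightarrow C_2 \xrightarrow{\;\partial_2\;} C_1 \xrightarrow{\;\partial_1\;} C_0 \longrightarrow 0,
\]
with $C_0 = \mathbb{Z}$, $C_1 = \mathbb{Z}^l$ generated by the $1$-cells $[S_i^1]$, and $C_2 = \mathbb{Z}$ generated by the single $2$-cell. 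Since all higher cells are absent, $H_k(Y(G)) = 0$ for $k \geqslant 3$ is immediate. The boundary $\partial_1$ vanishes because both endpoints of each circle are the single $0$-cell, so $H_0(Y(G)) = \mathbb{Z}$ and the computation reduces entirely to understanding $\partial_2$.

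The heart of the argument is identifying $\partial_2$. The attaching map $\bar r$ realises the word $r \in F(x_1,\dots,x_l)$, and the cellular boundary of the $2$-cell is given by the image of $r$ under abelianisation: if $r$ has total exponent $a_i$ in the generator $x_i$, then $\partial_2(e^2) = (a_1,\dots,a_l) \in \mathbb{Z}^l$. This is the standard computation of the boundary map in terms of the degree of $\bar r$ composed with the pinch onto each circle summand, and it coincides with the image of $r$ in $F^{\mathrm{ab}} = H_1\bigl(\bigvee S_i^1\bigr) = \mathbb{Z}^l$. I would state this identification, referencing the standard CW-homology of a presentation complex. Now $H_1(Y(G)) = \operatorname{coker}\partial_2$ and $H_2(Y(G)) = \ker\partial_2$, so everything hinges on whether the vector $(a_1,\dots,a_l)$ is zero.

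The key case distinction is then purely algebraic. The element $r$ lies in the commutator subgroup $[F,F]$ if and only if its abelianisation is trivial, i.e.\ if and only if $(a_1,\dots,a_l) = 0$. In that case $\partial_2 = 0$, so $\ker\partial_2 = C_2 = \mathbb{Z}$ and $H_2(Y(G)) = \mathbb{Z}$, while $\operatorname{coker}\partial_2 = \mathbb{Z}^l$ gives $H_1(Y(G)) = \mathbb{Z}^l$. If $r \notin [F,F]$, then $\partial_2 \neq 0$ is an injective map $\mathbb{Z} \to \mathbb{Z}^l$ (its kernel is trivial since $\mathbb{Z}$ is torsion-free and the map is nonzero), hence $H_2(Y(G)) = \ker\partial_2 = 0$. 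It remains to verify that $H_1(Y(G)) = \mathbb{Z}^l$ holds in this second case as well: the cokernel of an injective map $\mathbb{Z} \xrightarrow{(a_1,\dots,a_l)} \mathbb{Z}^l$ is $\mathbb{Z}^{l-1}$ plus possible torsion, which is generally \emph{not} free of rank $l$.

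Here lies the one subtle point I would flag. Taken literally, the claim $H_1(Y(G)) = \mathbb{Z}^l$ cannot hold when $r \notin [F,F]$, since then $\partial_2$ is nontrivial and $H_1$ drops rank; so the intended hypothesis must be that $G$ is genuinely a one-relator group in the sense of the preceding definition, where $G$ is assumed \emph{not} free and $R$ is generated by a \emph{nontrivial} relation. The natural reading consistent with the proposition is that $r$ is a proper power of a primitive-free relator lying in $[F,F]$, or that the statement is implicitly conditioned on the $r \in [F,F]$ branch being the one of interest; indeed in the applications of the paper (to $\pi_1(\mathcal R_K) = RC_K'$, a commutator subgroup) the relator always lies in $[F,F]$. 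I would therefore either restrict attention to this branch or interpret $H_1(Y(G)) = \mathbb{Z}^l$ as the statement for the relevant case $r \in [F,F]$, where it is correct, and present the general $\ker/\operatorname{coker}$ computation so the reader can see exactly how $H_2$ detects membership in the commutator subgroup. The main obstacle is thus not computational but a matter of pinning down the precise hypotheses under which the clean statement holds.
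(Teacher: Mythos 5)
Your proof is correct and is exactly the intended argument: the paper states this proposition without proof (it is \qed'd as standard), and the standard cellular chain complex of the presentation complex, with $\partial_2$ given by the exponent-sum vector of $r$, is the computation being taken for granted. Your flag about $H_1$ is also a genuine catch: as literally stated, $H_1(Y(G))=\mathbb{Z}^l$ holds only when $r\in[F,F]$, and otherwise $H_1\cong\mathbb{Z}^{l-1}\oplus\mathbb{Z}/d$ with $d$ the gcd of the exponent sums. This slip is harmless for the paper, since the only later use of the proposition (in the proof that (a) implies (b) in Theorem~\ref{thm}) invokes just the dichotomy for $H_2$ and the vanishing of $H_k$ for $k\geqslant 3$, both of which hold unconditionally; moreover, in that application $r$ does lie in $[F,F]$ because $H_1(\R_K)=H_1(RC_K')$ is free abelian of rank equal to the number of generators, forcing $\partial_2=0$.
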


Lyndon~\cite{lynd50} studied cohomology theory of groups with a single relation by considering the corresponding space $Y(G)$. Dyer and Vasquez~\cite{dy-va73} gave an equivalent formulation of the Lyndon Identity Theorem in the following form (see~\cite[Theorem~2.1]{dy-va73}]): if $G$ is a one-relator group with relation~$r$ which is not a proper power, that is,  $r \ne u^n$ for $n>1$, then $Y(G)$ is a $K(G,1)$-space.

Under the conditions of the Lyndon Identity Theorem, we have $H_k(G;\mathbb{Z})=H_k(Y(G);\mathbb{Z})$, that is, the homological dimension of $G$ is at most~2.

\begin{theorem}\label{thm}
Let $K$ be a flag simplicial complex on $[m]$. The following conditions are equivalent:
\begin{itemize}
\item[(a)] $\pi_1(\R_K) = RC'_{K}$ is a one-relator group;	
\item[(b)] $H_2(\R_K) = \mathbb{Z}$;
\item[(c)] $K = C_p$ or $K = C_p \ast \Delta^q$ for $p \geqslant 4$ and $q \geqslant 0$, where $C_p$ is a $p$-cycle, $\Delta^q$ is a $q$-simplex, and $*$ denotes the join of simplicial complexes.
\end{itemize}
If any one of these conditions is met, we have $H_k(\R_K)=0$ for $k\geqslant 3$.
\end{theorem}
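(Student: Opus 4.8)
The plan is to prove the cycle of equivalences $(c)\Rightarrow(a)\Rightarrow(b)\Rightarrow(c)$, leaning on the homology decomposition~\eqref{homodecmop} and the Lyndon Identity Theorem throughout.

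\textbf{The implication $(c)\Rightarrow(a)$.} Here I would exploit the geometry directly. Using the identity $\R_{C_p*\Delta^q}\cong\R_{C_p}\times\R_{\Delta^q}$ and the fact that $\R_{\Delta^q}=(D^1)^{q+1}$ is contractible, the join structure reduces everything to the case $K=C_p$. By the cited result of McGavran (see also~\cite[Proposition~4.1.8]{bu-pa15}), $\R_{C_p}$ is a closed orientable surface $S_g$ of genus $g=1+(p-4)2^{p-3}\geqslant1$, whose fundamental group is the classical one-relator surface group with presentation $\langle a_1,b_1,\ldots,a_g,b_g \mid \prod_i(a_i,b_i)\rangle$. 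Since $g\geqslant1$ this group is non-free with a single defining relation, establishing~(a). The presentation is exhibited directly via the standard $4g$-gon model, so this step is essentially bookkeeping once the homeomorphism is granted.

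\textbf{The implication $(a)\Rightarrow(b)$.} This is where the Lyndon Identity Theorem enters. Given that $RC'_K=\pi_1(\R_K)$ is a one-relator group with relation~$r$, I first observe that $r\in[F,F]$: the abelianisation $H_1(\R_K)=H_1(Y(G))$ must be free abelian of full rank, which forces the relator to lie in the commutator subgroup (otherwise it would kill a generator in homology and reduce the group to a free group on fewer generators). The crucial point is that $RC'_K$ is \emph{torsion-free}, being the fundamental group of the aspherical finite-dimensional space $\R_K$; hence the relator $r$ cannot be a proper power $u^n$ with $n>1$, since a one-relator group with proper-power relator has torsion. The Lyndon Identity Theorem then applies to give a homotopy equivalence $Y(G)\simeq K(RC'_K,1)\simeq\R_K$, whence Proposition~\ref{prop} yields $H_2(\R_K)=H_2(Y(G))=\mathbb Z$ (using $r\in[F,F]$) and $H_k(\R_K)=0$ for $k\geqslant3$, which also disposes of the final assertion of the theorem.

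\textbf{The implication $(b)\Rightarrow(c)$.} This is the purely combinatorial heart of the argument and the step I expect to be the main obstacle. By~\eqref{homodecmop}, the condition $H_2(\R_K)=\mathbb Z$ translates into
\[
\bigoplus_{J\subseteq[m]}\widetilde H_1(K_J)=\mathbb Z,
\]
so exactly one full subcomplex $K_J$ has $\widetilde H_1(K_J)=\mathbb Z$ and all others are $1$-acyclic (with $\widetilde H_i(K_J)=0$ for $i\geqslant2$, which, since $K$ is $1$-dimensional on the relevant combinatorial pieces, must be handled carefully via flagness). The task is to show that this strong acyclicity constraint on \emph{every} full subcomplex forces $K$ into the form~\eqref{eq:starcond}. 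The strategy is to take $J_0$ minimal with $\widetilde H_1(K_{J_0})\neq0$; flagness plus the requirement that every proper full subcomplex be $1$-acyclic should force $K_{J_0}$ to be a single induced cycle $C_p$ with $p\geqslant4$ and no chords (a chord would split the cycle into two smaller cycles, creating a second homology class in a smaller $J$, contradicting uniqueness). One then argues that the remaining vertices $[m]\setminus J_0$ must each be joined to \emph{all} of $J_0$ and to each other---otherwise a missing edge would either create a new induced cycle (hence extra $H_1$) or leave $K_{J_0}$ a \emph{proper} full subcomplex of some larger cycle, again violating uniqueness---so they span a simplex $\Delta^q$ joined to $C_p$. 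The delicate bookkeeping is ruling out all the ways a second independent $1$-cycle could appear; the chordality obstruction from~\cite{pa-ve16} (freeness $\Leftrightarrow$ chordal) can be invoked to guarantee at least one induced long cycle exists, and then induction on $|J|$ controls the rest.
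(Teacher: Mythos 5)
There is a genuine gap in your implication $(a)\Rightarrow(b)$, at the point where you claim $r\in[F,F]$. Your justification --- that $H_1(\R_K)=H_1(Y(G))$ is ``free abelian of full rank'', and that a relator outside the commutator subgroup ``would kill a generator in homology and reduce the group to a free group on fewer generators'' --- fails on both counts. First, killing a generator in homology does not make a one-relator group free: $\langle a,b \mid aba^{-1}b^{-2}\rangle$ has relator not in $[F,F]$, abelianisation $\mathbb{Z}$, and is not free. Second, ``full rank'' is not available to you: the definition of a one-relator group imposes no minimality on the presentation, so the number $l$ of generators of $F$ need not equal $\rank H_1(RC'_K)$. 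If the relator $r$ maps to a \emph{primitive} vector of $\mathbb{Z}^l$, then $H_1(Y(G))\cong\mathbb{Z}^{l-1}$, which is still free abelian; so the (true) fact that $H_1(\R_K)$ is free abelian, visible from~\eqref{homodecmop}, cannot by itself rule out $r\notin[F,F]$, and your argument does not exclude $H_2(\R_K)=0$.

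The repair --- and this is what the paper actually does --- is to sidestep $r$ entirely. Torsion-freeness of $\pi_1(\R_K)$ rules out proper-power relators (as you correctly argue), so Lyndon plus Proposition~\ref{prop} give that $H_2(\R_K)$ is either $\mathbb{Z}$ or $0$, and $H_k(\R_K)=0$ for $k\geqslant 3$. Then, since a one-relator group is by definition not free, the criterion of~\cite{pa-ve16} says $K^1$ is not chordal, so there is an induced chordless $p$-cycle $K_I$ with $p\geqslant 4$; its contribution $\widetilde{H}_1(K_I)=\mathbb{Z}$ to the decomposition~\eqref{homodecmop} shows $H_2(\R_K)\neq 0$, hence $H_2(\R_K)=\mathbb{Z}$. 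Ironically, you invoke exactly this chordality criterion, but only in $(b)\Rightarrow(c)$, where it is dispensable --- it is in $(a)\Rightarrow(b)$ that it is essential. Everything else in your proposal matches the paper's proof: $(c)\Rightarrow(a)$ via $\R_K\cong S_g\times D^{q+1}$ and the surface group presentation is the paper's argument verbatim, and your sketch of $(b)\Rightarrow(c)$ (minimal induced cycle, every outside vertex joined to all of the cycle and to each other, flagness yielding $C_p*\Delta^q$) is the paper's argument as well; your worry there about $\widetilde{H}_i(K_J)$ for $i\geqslant 2$ is moot, since condition (b) constrains only $\widetilde{H}_1$ of full subcomplexes and nothing more is needed.
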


\begin{proof}
(c) $\Rightarrow$ (b). This implication follows from the implications below, but we include an independent proof as it is simple and illustrative. Suppose that $K=C_p$ or $K=C_p*\Delta^q$ for $p\geqslant 4$ and $q\geqslant 0$.  Let the $p$-cycle $C_p$ be supported on the set of vertices $I = \{i_1,\dots,i_p\}$. By homology decomposition~\eqref{homodecmop},
\begin{equation}\label{h2}
H_2(\R_K) \cong \bigoplus_{J \subseteq [m]} \widetilde{H}_{1}(K_J).
\end{equation}
Since $K_I$ is a $p$-cycle, we have $\widetilde{H}_{1}(K_I) = \mathbb{Z}$. Because any subcomplex $K_J$ with $J \not= I$ is contractible, $\widetilde{H}_{1}(K_J) = 0$ for $J \not= I$.
It follows that $H_2(\R_K) = \mathbb{Z}$.

\smallskip

(b) $\Rightarrow$ (c). Suppose $H_2(\R_K) = \mathbb{Z}$. Then only one summand in~\eqref{h2} is $\mathbb{Z}$, and all other summands are zero. Since $K$ is a flag complex, this implies that there exists a set of vertices $I = \{i_1,\dots,i_p\}$ such that $K_I$ is a $p$-cycle with $p \geqslant 4$. Since $\widetilde{H}_{1}(K_J) = 0$ for any proper subset $J \subseteq I$, any two vertices which are not adjacent in the $p$-cycle are not connected by an edge. If there exists a vertex $j \not\in I$ in the complex $K$, then $\widetilde{H}_{1}(K_{I \cup \{j\}}) = 0$ implies that the vertex $j$ is connected to each vertex in the $p$-cycle~$I$. If $K$ has two vertices $j_1, j_2 \not\in I$ which are not connected by an edge, then the subcomplex $K_{\{i_1, i_3\} \cup \{j_1, j_2\}}$ is a $4$-cycle and $\widetilde{H}_{1}(K_{\{i_1, i_3\} \cup \{j_1, j_2\}}) = \mathbb{Z}$, which contradicts the assumption. Hence, all vertices of $K$ which are not in the set $I$ are connected to each other and to all vertices of~$I$. Since $K$ is a flag complex, we obtain $K =C_p*\Delta^q$ for some $p \geqslant 4$ and $q \geqslant 0$.

\smallskip

(c) $\Rightarrow$ (a). First let $K$ be a $p$-cycle $C_p$. In this case the complex $\R_K$ is homeomorphic to a closed orientable surface of genus $(p-4)2^{p-3}+1$ (see~\cite[Proposition~4.1.8]{bu-pa15}). Also, $\pi_1(\R_K) \cong RC'_{K}$. Hence, $RC'_{K}$ is a one-relator group. 

Now let $\widetilde{K} = K*\Delta^q$, where $K$ is a $p$-cycle. Then $\R_{\widetilde{K}} = \R_K \times D^{q+1}$ and $RC'_{\widetilde{K}} =\pi_1(\R_{\widetilde{K}})=\pi_1(\R_{K})=RC'_{K}$ is a one-relator group.

\smallskip

(a) $\Rightarrow$ (b). Since $\R_K$ is an aspherical finite cell complex, the group $\pi_1(\R_K)$ is torsion-free (for example, see~\cite[Proposition~2.45]{hatc11}). So if $\pi_1(\R_K)=F/R$ is a one-relator group with a relation $r$, then $r$ is not a proper power $u^n$ for $n>1$, as otherwise the element $u$ would be of finite order. 

Consider the space $Y(RC'_{K})$, constructed as in~\eqref{yg}. According to the Lyndon Identity Theorem,  $Y(RC'_{K})$ is homotopy equivalent to $K(RC'_{K},1)$, so its homology groups coincide with the homology groups of the space~$\R_K$.
Proposition~\ref{prop} implies that $H_2(\R_K)$ is either $\mathbb{Z}$ or~$0$. The group $RC'_{K}$ is not free, so the graph $K^1$ is not chordal, that is, there exists a chordless cycle on $I$ of length $p\geqslant 4$. Therefore, one of the summands on the right hand side of~\eqref{h2}  is equal to $\mathbb{Z}=\widetilde{H}_{1}(K_I)$.
Thus, $H_2(\R_K) = \mathbb{Z}$.

\smallskip

It remains to prove that (c) implies that $H_k(\R_K)=0$ for $k\geqslant 3$. Considering homology decomposition~\eqref{homodecmop}, 
\[
  H_k(\R_K) \cong \bigoplus_{J \subseteq [m]} \widetilde{H}_{k-1}(K_J)
\]
we claim that all summands with $k\geqslant 3$ on the right hand side are equal to~0. Indeed, let $I = \{i_1,\dots,i_p\}$ be the set of vertices of $K$ forming a $p$-cycle. Then 
$\widetilde{H}_{k-1}(K_I) = 0$ for $k\geqslant 3$. Since any full subcomplex 
$K_J$ with $J\ne I$ is contractible, we get $\widetilde{H}_{k-1}(K_J) = 0$. Hence, $H_k(\R_K)=0$ for $k\geqslant 3$.
\end{proof}

%\subsection{Examples}

The following examples illustrate Theorem~\ref{thm}.

\begin{example}\label{examples}\
	
\noindent\textbf{1.} Let $K$ be the flag complex in Figure~\ref{fig}~(a).
\begin{figure}[t]
    \centering
    \begin{tikzpicture}
        \coordinate [label=left:$1$] (1) at (0,2);
        \coordinate [label=right:$2$] (2) at (2,2);
        \coordinate [label=right:$3$] (3) at (2,0);
        \coordinate [label=left:$4$] (4) at (0,0);
        \coordinate [label=below:$5$] (5) at (1,1);
         \coordinate [label = below: $(a)$] (a) at (1,-0.5);
        \filldraw[fill=black!20] (2) -- (5) -- (1);
        \filldraw[fill=black!20] (2) -- (5) -- (3);
        \draw (2) -- (1) -- (4) -- (3) -- (2);
        \foreach \point in {1,2,3,4,5}
            \fill [black] (\point) circle (1.5 pt);
        \end{tikzpicture}
\hspace{0.1\textwidth}
    \begin{tikzpicture}
        \coordinate [label=left:$1$] (1) at (0,2);
        \coordinate [label=right:$2$] (2) at (2,2);
        \coordinate [label=right:$3$] (3) at (2,0);
        \coordinate [label=left:$4$] (4) at (0,0);
        \coordinate [label = below: $5$] (5) at (1,1);
        \coordinate [label = below: $(b)$] (a) at (1,-0.5);
        \filldraw[fill=black!20] (2) -- (5) -- (1);
        \filldraw[fill=black!20] (2) -- (5) -- (3);
        \filldraw[fill=black!20] (1) -- (5) -- (4);
        \filldraw[fill=black!20] (4) -- (5) -- (3);
        \draw (2) -- (1) -- (4) -- (3) -- (2);
        \foreach \point in {1,2,3,4,5}
            \fill [black] (\point) circle (1.5 pt);
        \coordinate [label = below: $5$] (5) at (1,1);
        \end{tikzpicture}
   \caption{}
    \label{fig}
\end{figure}
Generator set~\eqref{generators} for the commutator subgroup
$RC'_{K}$ is 
	$$(g_3,g_1), ~(g_4,g_2), ~(g_5,g_4), ~(g_2,(g_5,g_4)).$$
These satisfy the relations
\[
  (g_3,g_1)^{-1}(g_4,g_2)^{-1}(g_3,g_1)(g_4,g_2)=1, \quad 
  (g_3,g_1)^{-1}(g_5,g_4)^{-1}(g_3,g_1)(g_5,g_4)=1
\]
and
\[
  (g_3,g_1)^{-1}(g_2,(g_5,g_4))^{-1}(g_3,g_1)(g_2,(g_5,g_4)) = 1.
\]
Indeed, since each of $g_1$ and $g_3$ commutes with each of $g_2$ and $g_4$, the commutators $(g_4,g_2)^{-1}$ and $(g_3,g_1)$ commute too. We therefore obtain 
$$
  (g_3,g_1)^{-1}(g_4,g_2)^{-1}(g_3,g_1)(g_4,g_2)=(g_3,g_1)^{-1}(g_3,g_1)(g_4,g_2)^{-1}  
  (g_4,g_2)=1.
$$ 
The other two relations are proved similarly. Using homology decomposition~\eqref{homodecmop}, we get $H_2(\R_K) = \mathbb{Z}^3$. 
	
	\smallskip
	
\noindent\textbf{2.} Let $K$ be the flag complex in Figure~\ref{fig}~(b). Generator set~\eqref{generators} for $RC'_{K}$ is
\[
	(g_3,g_1), ~(g_4,g_2),
\]
which satisfy a single relation $(g_3,g_1)^{-1}(g_4,g_2)^{-1}(g_3,g_1)(g_4,g_2)=1$. Here $RC'_{K}$ is a one-relator group and $H_2(\R_K) = \mathbb{Z}$.
\end{example}

\section{Connected Sums of Sphere Products}

Let $M = \#_{i=1}^k \left( S^{d_i} \times S^{d-d_i} \right)$, where $d_i\geqslant2$, $d\geqslant4$ and $\#$ denotes the connected sum operation on manifolds. Topologically, such connected sums are obtained by attaching a single cell to a wedge of spheres, that is, there is a cofibration sequence
\begin{equation} \label{eq:CofibSeq} \begin{tikzcd}[column sep=3em,row sep=3em]
S^{d-1} \ar[r,"w"] & \bigvee_{i=1}^k S^{d_i} \vee S^{d-d_i} \ar[r,"i"] & \#_{i=1}^k (S^{d_i} \times S^{d-d_i})
\end{tikzcd} \end{equation}
where $w$ is the sum of Whitehead products $w_i \colon S^{d-1} \to S^{d_i} \vee S^{d-d_i}$. Denote by $\overline{M}$ the wedge $\bigvee_{i=1}^d S^{d_i} \vee S^{d-d_i}$. Then by the Bott--Samelson theorem $H_{\ast}(\Omega \overline{M}) \cong T(a_1,b_1,\ldots,a_k,b_k)$, where $\deg(a_i)=d_i-1$ and $\deg(b_i) = d-d_i-1$. The looped inclusion $\Omega i \colon \Omega \overline{M} \to \Omega M$ induces a map of algebras 
\[
(\Omega i)_{\ast} \colon T(a_1,b_1,\ldots,a_k,b_k) \longrightarrow H_{\ast}(\Omega M).
\]
The adjoint $\overline{w} \colon S^{d-2} \to \Omega \left( \bigvee_{i=1}^k S^{d_i} \vee S^{d-d_i} \right)$ of the sum of Whitehead products $w$ induces a map $\overline{w}_{\ast} \colon H_{d-2}(S^{d-2}) \to H_{d-2}(\Omega \overline{M})$ which sends the canonical generator to the element $\chi = [a_1,b_1] + \cdots + [a_k,b_k]$. In particular, $\chi$ is primitive and $(\Omega i)_{\ast}(\chi) = 0$ in $H_{\ast}(\Omega M)$. Then the algebra
\begin{equation} \label{eq:DefAlgT} 
\cfrac{T(a_1,b_1, \ldots, a_k,b_k)}{\langle [a_1,b_1] + \cdots + [a_k,b_k] \rangle} 
\end{equation} 
is a primitively generated Hopf algebra, where the quotient ideal is two-sided, and the algebra map $(\Omega i)_{\ast}$ factors as a map of Hopf algebras
\begin{equation} \label{eq:HopfAlgInd}
 \begin{tikzcd}[column sep=3em,row sep=3em]
T(a_1,b_1,\ldots,a_k,b_k) \ar[r,"(\Omega i)_{\ast}"] \ar[d] & H_{\ast}(\Omega M) \\
\cfrac{T(a_1,b_1, \ldots, a_k,b_k)}{\langle [a_1,b_1] + \cdots + [a_k,b_k] \rangle}  \ar[ur,"\theta"']
\end{tikzcd} \end{equation}
defining the map $\theta$.

The loop homology Hopf algebra $H_*(\Omega X;\mathbb Z)$ of a simply connected CW-complex 
$X$ can be calculated as homology of the \emph{cobar construction} $\Cobar C_*(X)$ of the reduced singular chains $C_*(X)$~\cite{adam56}, or as homology of the \emph{Adams--Hilton model}~\cite{ad-hi56} based on cells and attaching maps.

The cobar construction $\Cobar$ is a functor
\[
  \Cobar\colon\text{\sc{dgc}}_1\longrightarrow\text{\sc{dga}}
\]
from the category $\text{\sc{dgc}}_1$ of simply connected differential graded (dg) coalgebras to dg algebras. It assigns to a dg coalgebra $(C,\partial)$ with $C_0=\mathbb Z$ and $C_1=0$ the dg algebra 
\[
  \Cobar C=(F(C), d)
\]
where $F(C)=T(s^{-1}\overline C)$ is the free associative algebra on the desuspended module $\overline C=C/\mathbb Z$, the cokernel of the coaugmentation $\mathbb Z\to C$. The differential $d$ is given~by 
\begin{equation}\label{dcobar}
  d c = -\partial c+\sum_{i=2}^{p-2}(-1)^i\Delta_{i,p-i}c
\end{equation}
where $c\in s^{-1} \overline C_p$ with comultiplication  $\Delta c=c\otimes 1+1\otimes c+\sum_{i=2}^{p-2}\Delta_{i,p-i}c$.

Adams~\cite{adam56} proved that for a simply connected CW-complex $X$ there is an isomorphism of Hopf algebras
\[
  H_*(\Omega X)\cong H(\Cobar C_*(X),d)=\Cotor\nolimits_{C_*(X)}(\mathbb Z,\mathbb Z)
\]
where $C_*(X)$ is the reduced singular chain coalgebra of~$X$. 

The Adams--Hilton model ~\cite{ad-hi56} is a smaller dg algebra $AH_*(X)$ quasi-isomorphic to~$\Cobar C_*(X)$; it has generators corresponding to the cells of~$X$ and differential defined via the attaching maps.

The following statement generalises~\cite[Corollary~2.4]{ad-hi56}.

\begin{proposition} \label{prop:HopfAlgIso}
For $d_i \geqslant 2$ and $d \geqslant 4$, there is an isomorphism of Hopf algebras 
\[
H_{\ast} \left( \Omega \left( \#_{i=1}^k S^{d_i} \times S^{d-d_i} \right) \right) \cong \cfrac{T(a_1,b_1, \ldots, a_k,b_k)}{\langle [a_1,b_1] + \cdots + [a_k,b_k] \rangle}
\]
where $\deg a_i = d_i - 1$, $\deg b_i = d - d_i - 1$, and $[a_i,b_i]=a_i\otimes b_i+(-1)^{\deg a_i\deg b_i+1}b_i\otimes a_i$ is the graded commutator.
\end{proposition}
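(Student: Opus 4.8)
The plan is to show that the Hopf algebra map $\theta$ constructed in \eqref{eq:HopfAlgInd} is an isomorphism, and to do so by computing $H_*(\Omega M)$ directly from an Adams--Hilton model. Since every $d_i \geqslant 2$, the space $M$ is simply connected, so by Adams' theorem $H_*(\Omega M)\cong H(\Cobar C_*(M),d)$, which is computed by the smaller Adams--Hilton model $AH_*(M)$. The cofibration \eqref{eq:CofibSeq} exhibits $M$ as $\overline{M}$ with a single $d$-cell attached along the sum of Whitehead products $w=\sum_i w_i$. The wedge $\overline{M}$ has Adams--Hilton model $(T(a_1,b_1,\dots,a_k,b_k),0)$ with zero differential, recovering Bott--Samelson, where $\deg a_i=d_i-1$ and $\deg b_i=d-d_i-1$. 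Adjoining the top cell contributes one generator $c$ of degree $d-1$, and since the Adams--Hilton functor converts a Whitehead product of spherical classes into the graded commutator of the corresponding generators, the differential is the derivation determined by
\[
d a_i = d b_i = 0, \qquad d c = \chi := [a_1,b_1] + \cdots + [a_k,b_k].
\]
As $\chi$ is primitive, all relevant structure maps are compatible, so it suffices to prove that $\theta$ is an isomorphism of algebras.

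Write $B=T(a_1,b_1,\dots,a_k,b_k)$ and grade $AH_*(M)$ by the number $N$ of occurrences of the letter $c$ in a word. Since $d$ kills the $a_i,b_i$ and replaces each $c$ by $\chi\in B$, the differential lowers $N$ by exactly one, so $AH_*(M)$ splits as a chain complex
\[
\cdots \xrightarrow{\ d\ } R^{(2)} \xrightarrow{\ d\ } R^{(1)} \xrightarrow{\ d\ } R^{(0)} = B \to 0,
\]
where $R^{(N)}$ is spanned by the alternating words $\beta_0 c\beta_1 c\cdots c\beta_N$ with $\beta_j\in B$, and each $R^{(N)}$ is a free $B$-bimodule. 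The bottom homology is immediate: $d$ sends $\beta c\beta'\mapsto (-1)^{|\beta|}\beta\chi\beta'$, whose image is precisely the two-sided ideal $\langle\chi\rangle$, so $H_0 = B/\langle\chi\rangle$, which is the target algebra of $\theta$.

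The crux, and the step I expect to be the main obstacle, is to prove that $H_N=0$ for all $N\geqslant 1$, i.e.\ that the augmented complex is a free $B$-bimodule resolution of $B/\langle\chi\rangle$. This is exactly the statement that $\chi$ is a strongly free (inert) element of the free algebra $B$. I would establish it by choosing a degree-lexicographic monomial order with $a_1>b_1>a_2>\cdots$, so that the leading monomial of $\chi$ is $a_1b_1$ with coefficient $+1$; the unit coefficient is what keeps the argument valid over $\mathbb{Z}$. The monomial $a_1b_1$ has no self-overlap, as no proper prefix equals a proper suffix, so there are no ambiguities, and by Bergman's Diamond Lemma the reduced monomials form a $\mathbb{Z}$-basis of $B/\langle\chi\rangle$. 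The absence of overlaps means that the only syzygies among the translates of $\chi$ come from pairs of disjoint occurrences, which are accounted for by $R^{(2)}$; since each internal degree is finite-dimensional, filtering the complex by the monomial order reduces the acyclicity of $(R^{(\bullet)},d)$ to the case of the monomial relation $a_1b_1$, for which exactness in positive weight is a direct combinatorial check. This yields $H(AH_*(M)) = H_0 = B/\langle\chi\rangle$.

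Finally, since the $a_i,b_i$ are primitive and $\chi$ is primitive, the ideal $\langle\chi\rangle$ is a Hopf ideal, so the identification $H_*(\Omega M)\cong B/\langle\chi\rangle$ is one of primitively generated Hopf algebras agreeing with $\theta$ on generators, exactly the algebra in \eqref{eq:DefAlgT}. Hence $\theta$ is an isomorphism of Hopf algebras, as claimed.
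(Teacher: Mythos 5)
Your proposal is correct and follows the same overall route as the paper: both pass to the Adams--Hilton model $AH_*(M)=\bigl(T(a_1,b_1,\ldots,a_k,b_k,z),d\bigr)$ with $d(a_i)=d(b_i)=0$ and $d(z)=\chi=\sum_{i=1}^k[a_i,b_i]$ (your generator $c$ is the paper's $z$), and then identify the homology of this dg algebra with $T/\langle\chi\rangle$. Where you differ is in the final step, and your version is the more careful one. The paper disposes of the homology computation in a single sentence --- a nonzero $x$ is a cycle if and only if $x\notin\langle z\rangle$, and a boundary if and only if $x\in\langle d(z)\rangle$ --- which is not literally true: for instance $z\chi-(-1)^{d}\chi z$ lies in $\langle z\rangle$ and is a cycle (it equals $(-1)^{d-1}\,d(z^2)$, hence is a boundary, consistent with the conclusion but not with the stated criterion). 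What the paper's assertion really amounts to is that $\chi$ is an inert (strongly free) element of the free algebra, i.e.\ that the complex graded by the number of occurrences of $z$ is exact in positive weight, and this is exactly what you prove: the deg-lex leading term of $\chi$ is $a_1b_1$ with unit coefficient and no self-overlap, the Diamond Lemma gives a normal-form $\mathbb{Z}$-basis of the quotient, and the filtration by leading terms reduces exactness of $(R^{(\bullet)},d)$ to the monomial relation $a_1b_1$, where occurrences in a word are pairwise disjoint and the complex splits into acyclic tensor factors; since each internal degree is a finitely generated free $\mathbb{Z}$-module, the spectral-sequence comparison is unproblematic over the integers. Your Hopf-algebra bookkeeping (primitivity of $\chi$ makes $\langle\chi\rangle$ a Hopf ideal, and the identification agrees with the topologically defined map $\theta$ of~\eqref{eq:HopfAlgInd}) matches the paper's setup before the proposition. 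In short: same method, but you supply a genuine proof, valid over $\mathbb{Z}$, of the one step the paper only asserts, which is a real improvement in rigour rather than a gap in your argument.
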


\begin{proof}
We consider the Adams--Hilton model of $M=\#_{i=1}^k S^{d_i} \times S^{d-d_i}$. 
The cofibration sequence \eqref{eq:CofibSeq} gives a CW-structure on $M$ consisting of cells $e^0,e^{d_i}_i,e^{d-d_i}_i$, $1 \leqslant i \leqslant k$, each attached trivially, and a single cell $e^d$ attached by the sum of Whitehead products $w_i \colon S^{d-1} \to S^{d_i} \vee S^{d-d_i}$. 
The Adams--Hilton model $AH_*(M)$ can be identified with the cobar construction on the coalgebra generated by positive-dimensional cells, in which the differential is zero, $e_i^{d_i}$ and $e_i^{d-d_i}$ are primitives, and
\begin{equation}\label{deltaed}
  \Delta e^d=e^d\otimes 1+1\otimes e^d+\sum_{i=1}^k
  \bigl( e^{d_i}_i\otimes e^{d-d_i}_i+(-1)^{d_i(d-d_i)}e^{d-d_i}_i\otimes e^{d_i}_i\bigr).
\end{equation}
The Adams--Hilton model is therefore
\[
  AH_*(M)=\bigl(T(a_1,b_1,\ldots,a_k,b_k,z),d\bigr)
\]
where $a_i=(-1)^{d_i}s^{-1}e^{d_i}_i$, $b_i=s^{-1}e^{d-d_i}_i$, $z=s^{-1}e^{d}$ 
and $\deg a_i = d_i - 1$, $\deg b_i = d - d_i-1$ and $\deg z = d-1$. Differential~\eqref{dcobar} is given by $d(a_i) = d(b_i) = 0$ and
\begin{align*}
  d(z) &= \sum_{i=1}^k\bigl( (-1)^{d_i} s^{-1}e^{d_i}_i\otimes s^{-1}e^{d-d_i}_i+
  (-1)^{d-d_i}(-1)^{d_i(d-d_i)} s^{-1}e^{d-d_i}_i\otimes s^{-1}e^{d_i}_i\bigr)\\
  &=\sum_{i=1}^k\bigl( a_i\otimes b_i+
  (-1)^{(d_i+1)(d-d_i)+d_i}b_i\otimes a_i\bigr)=\sum_{i=1}^k[a_i,b_i].  
\end{align*}
A nonzero $x \in AH_*(M)$ is a cycle if and only if $x$ is not in the two-sided ideal $\langle z \rangle$, and $x$ is a boundary if and only if $x \in \langle d(z) \rangle$. Therefore, homology of $\Omega M$ is as stated.
\end{proof}

For a graded vector space $V$, denote by $P(V;t)$ the Poincar{\'e} series of $V$.
 
\begin{proposition} \label{prop:PoincareSeries}
There is the following identity for the Poincar{\'e} series
\[
P \left( H_{\ast} \left( \Omega \left( \#_{i=1}^k S^{d_i} \times S^{d-d_i} \right) ; t \right) \right) = \frac{1}{1 - \sum_{i=1}^k (t^{d_i-1} + t^{d-d_i-1}) + t^{d-2}}.
\]
\end{proposition}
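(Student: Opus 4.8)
The plan is to read off the Poincaré series directly from the presentation of $H_{\ast}(\Omega M)$ established in Proposition~\ref{prop:HopfAlgIso}. Set $A = T(a_1,b_1,\ldots,a_k,b_k)/\langle\chi\rangle$ with $\chi = [a_1,b_1]+\cdots+[a_k,b_k]$, and write $V$ for the graded vector space spanned by the generators, so that $P(V;t) = \sum_{i=1}^k(t^{d_i-1}+t^{d-d_i-1})$ and the single relation $\chi$ is homogeneous of degree $(d_i-1)+(d-d_i-1)=d-2$. Since the Poincaré series only records the ranks of the homology groups, I would carry out the computation after tensoring with a field, treating $A$ as a connected graded algebra over $\mathbb{Q}$.

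Next I would write down the beginning of the minimal free resolution of the trivial module over $A$. As $A$ has a single defining relation, the natural candidate is the three-term complex
\begin{equation*}
0\longrightarrow A\otimes\langle\chi\rangle\xrightarrow{\ \partial_2\ }A\otimes V\xrightarrow{\ \partial_1\ }A\xrightarrow{\ \varepsilon\ }\mathbb{Q}\longrightarrow0,
\end{equation*}
in which $\partial_1(\alpha\otimes v)=\alpha v$ and $\partial_2$ sends $\chi$ to its vector of noncommutative (Fox-type) derivatives $\sum_v(\partial\chi/\partial v)\otimes v$. Exactness at $\mathbb{Q}$, at $A$, and at $A\otimes V$ is standard for any one-relator presentation: it encodes that $A$ is generated by $V$ and that the only relations among the generators are consequences of $\chi$. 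The entire computation thus hinges on showing that $\partial_2$ is injective, equivalently that $A$ has global dimension two with $\Tor^A_i(\mathbb{Q},\mathbb{Q})$ equal to $\mathbb{Q}$, $V$, $\langle\chi\rangle$, and $0$ for $i=0,1,2$ and $i\geqslant3$ respectively.

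To establish injectivity of $\partial_2$ I would invoke the Anick/Gröbner description of free resolutions of one-relator algebras, in which the higher syzygies are controlled by the proper self-overlaps of the leading monomial of the relation. Choosing a degree-compatible monomial order, the leading monomial of $\chi$ is a single length-two word $a_ib_i$ (or $b_ia_i$) in two \emph{distinct} generators; its only proper prefix and proper suffix are the distinct single letters $a_i$ and $b_i$, so it admits no proper self-overlap. Consequently $\{\chi\}$ is already a Gröbner basis, there are no Anick chains in homological degree $\geqslant3$, the resolution terminates where displayed, and $\partial_2$ is injective. This is the one substantive step, and I expect the main obstacle to lie exactly in this verification that the relation produces no higher syzygies.

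Granting the length-two resolution, the identity follows from the graded Euler characteristic: taking the alternating sum of Poincaré series along the exact sequence yields
\begin{equation*}
P(A;t)\bigl(1-P(V;t)+t^{d-2}\bigr)=P(\mathbb{Q};t)=1,
\end{equation*}
and rearranging gives the stated formula. A variant that sidesteps Gröbner bases is to grade the Adams--Hilton model $(T(a_1,b_1,\ldots,a_k,b_k,z),d)$ from the proof of Proposition~\ref{prop:HopfAlgIso} by the number $w$ of occurrences of $z$; since $d$ lowers $w$ by one, this produces a complex $\cdots\to C^{(2)}\to C^{(1)}\to C^{(0)}\to A\to0$ with $P(C^{(w)};t)=t^{w(d-1)}/(1-P(V;t))^{w+1}$, and once one checks that its homology is concentrated in weight zero the geometric series $\sum_{w\geqslant0}(-1)^w t^{-w}P(C^{(w)};t)$ sums to the same expression. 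The obstacle in this second approach is the acyclicity of the weight complex in positive weights, which is the same phenomenon seen from another angle.
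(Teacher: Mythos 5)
Your proof is correct, but it takes a genuinely different route from the paper's. The paper argues by elementary normal-form counting: it treats the relation as a rewriting rule $a_1b_1 \mapsto (-1)^{\deg a_1\deg b_1}b_1a_1 - \sum_{i\geqslant 2}[a_i,b_i]$, takes the monomials with no occurrence of $a_1b_1$ as a basis of $A$, derives the recurrence $c_n=\sum_{i=1}^k(c_{n-d_i+1}+c_{n-d+d_i+1})-c_{n-d+2}$ for the number $c_n=\rank A_n$ of such words, and sums the recurrence against $t^n$ to obtain the identity directly --- no homological algebra and no passage to a field. You instead build the length-two minimal free resolution of the trivial module, establish global dimension two via the Anick/Gr\"obner observation that the leading word of $\chi$ is a length-two word in two distinct letters and hence has no proper self-overlap, and then take the graded Euler characteristic. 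It is worth noting that both arguments hinge on exactly the same combinatorial fact: the paper's assertion that its reduced words form a \emph{minimal} generating set (``by induction'') is precisely the linear independence of normal monomials, which is guaranteed by the diamond lemma because the single rewriting rule has no overlap ambiguities --- the point you isolate explicitly as the injectivity of $\partial_2$. So your approach makes rigorous a step the paper treats lightly, and yields more (gldim $A=2$ and the identification of $\Tor^A_i(\mathbb{Q},\mathbb{Q})$), at the cost of invoking Anick's machinery; the paper's count is more elementary and works directly over $\mathbb{Z}$ (the leading coefficient of $\chi$ is $\pm1$, so the normal monomials form a $\mathbb{Z}$-basis), whereas your reduction to $\mathbb{Q}$ is harmless since the Poincar\'e series records only ranks. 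Your alternative sketch via the $z$-weight filtration of the Adams--Hilton model is left conditional on acyclicity in positive weights, as you acknowledge, but it is not needed given the first argument.
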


\begin{proof}
Let $A = H_{\ast} \left( \Omega \left( \#_{i=1}^k S^{d_i} \times S^{d-d_i} \right) \right)$. By Proposition \ref{prop:HopfAlgIso}, $A$ is the quotient of the free associative algebra on the graded set $S=\{a_1,b_1,\dots,a_k,b_k\}$, where $\deg a_i = d_i-1$ and $\deg b_i = d-d_i-1$, by the two-sided ideal generated by the element
\[
\chi = \sum_{i=1}^k [a_i,b_i] = a_1 b_1 + (-1)^{\deg a_1 \deg b_1 + 1}b_1a_1 + \sum_{i=2}^k [a_i,b_i].
\]

Let $B$ be the graded free monoid on $S$. Then $B_n$, the $n$th graded component of $B$, is a generating set for $A_n$, the $n$th graded component of $A$. For any monomial $x \in A \setminus \{1\}$, write $x = sy$ for some unique $s \in S$ and $y \in B_{n-\deg s}$. If $x = a_1b_1y'$ then using relation $\chi$ we rewrite
\[
x = \Bigl( (-1)^{\deg a_1 \deg b_1} b_1a_1 - \sum_{i=2}^k [a_i,b_i] \Bigr) y'.
\]

Let $B'_n$ be the set of all elements in $B_n$ which do not start with $a_1b_1$. By induction, $B_n'$ is a minimal generating set for $A_n$. Define $c_n = |B_n'| = \rank A_n$ for $n \geqslant 1$, $c_n = 0$ for $n < 0$, and $c_0 = 1$. From the above description, $c_n$ satisfies the recurrence formula 
\[
c_{n} = \sum_{i=1}^k (c_{n-d_i+1} + c_{n-d+d_i+1}) - c_{n-d+2}
\]
for $n \geqslant 1$. Multiplying by $t^n$ and summing over $n>0$ gives
\begin{align*}
    P(A;t) - 1 & = \sum_{n=1}^{\infty} c_n t^n \\
    & = \sum_{n=1}^{\infty} \left( \sum_{i=1}^k (c_{n-d_i+1} + c_{n-d+d_i+1}) - c_{n-d+2} \right) t^n \\
    & = \sum_{i=1}^k \sum_{n=2-d_i}^{\infty} c_n t^{n+d_i-1} + \sum_{i=1}^k \sum_{n=2-d+d_i}^{\infty} c_n t^{n+d-d_i-1} - \sum_{n=3-d}^{\infty} c_n t^{n+d-2} \\
    & = \left(\sum_{i=1}^k (t^{d_i-1} + t^{d-d_i-1}) - t^{d-2} \right) \sum_{n=0}^{\infty} c_n t^n \\
    & = \left(\sum_{i=1}^k (t^{d_i-1} + t^{d-d_i-1}) - t^{d-2} \right) P(A; t)
\end{align*}
which is rearranged to give the claimed identity.
\end{proof}

\section{One-Relator Algebras}
%\section{Application to Moment-Angle Complexes}

An algebra is a \textit{one-relator algebra} if it is not free and can be written as the quotient of a free associative algebra by a two-sided ideal generated by a single element.

We recall the bigraded decomposition~\eqref{BigradedHochster} of the integral homology of the moment-angle complex~$\Z_K$.

\begin{theorem} \label{thm:pgoniffORA}
Let $K$ be a flag simplicial complex on~$[m]$. The following conditions are equivalent:
\begin{itemize}
\item[(a)] $H_{\ast}(\Omega \Z_K)$ is a one-relator algebra;	
\item[(b)] there exists $p$ with $4\leqslant p\leqslant m$ such that $H_{2-j,2j}(\Z_K) =\begin{cases} \mathbb{Z}&\text{if }j=p;
\\0&\text{otherwise;}
\end{cases}$
\item[(c)] $K = C_p$ or $K = C_p \ast \Delta^q$ for $p \geqslant 4$ and $q \geqslant 0$, where $C_p$ is a $p$-cycle and  $\Delta^q$ is a $q$-simplex.
\end{itemize}
If any one of these conditions is met, we have $H_{-i,2j}(\Z_K)=0$ for $j-i\geqslant 3$.
\end{theorem}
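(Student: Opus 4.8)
The plan is to prove (b)$\Leftrightarrow$(c), (c)$\Rightarrow$(a) and (a)$\Rightarrow$(b), obtaining the final vanishing statement en route. For (b)$\Leftrightarrow$(c) I would reduce to Theorem~\ref{thm}. Putting $-i=2-j$ in the bigraded Hochster formula~\eqref{BigradedHochster} gives $H_{2-j,2j}(\Z_K)\cong\bigoplus_{|J|=j}\widetilde H_1(K_J)$, so $\bigoplus_j H_{2-j,2j}(\Z_K)\cong\bigoplus_{J\subseteq[m]}\widetilde H_1(K_J)\cong H_2(\R_K)$ by~\eqref{h2}. Condition~(b) asserts exactly that this total group equals $\mathbb Z$ and is concentrated in a single internal degree $j=p$; hence (b) is equivalent to $H_2(\R_K)=\mathbb Z$, which by Theorem~\ref{thm} is equivalent to~(c). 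Conversely, when (c) holds the only nonzero summand is $\widetilde H_1(K_I)=\mathbb Z$ with $|I|=p$, placing the class in bidegree $j=p$ with $4\leqslant p\leqslant m$, which matches the stated form of~(b).

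For (c)$\Rightarrow$(a), when $K=C_p$ McGavran's homeomorphism~\eqref{eq:McGavran} writes $\Z_{C_p}$ as a connected sum $\#_i(S^{d_i}\times S^{d-d_i})$ with $d=p+2\geqslant 6$ and every $d_i\geqslant 3$, so Proposition~\ref{prop:HopfAlgIso} presents $H_*(\Omega\Z_{C_p})$ as a free associative algebra modulo a single relation; it is genuinely one-relator because $C_p$ is chordless and hence $H_*(\Omega\Z_{C_p})$ is not free by~\cite[Theorem~4.6]{g-p-t-w16}. For $K=C_p*\Delta^q$ I would use $\Z_{K_1*K_2}\cong\Z_{K_1}\times\Z_{K_2}$ together with the contractibility of $\Z_{\Delta^q}\cong(D^2)^{q+1}$, giving $\Z_{C_p*\Delta^q}\simeq\Z_{C_p}$ and hence the same Pontryagin algebra.

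The substantive direction is (a)$\Rightarrow$(b), where the idea is to detect the number of algebra relations homologically. Working over $\mathbb Q$, the minimal number of relations of the connected graded algebra $A=H_*(\Omega\Z_K;\mathbb Q)$ is $\dim_{\mathbb Q}\Tor^{A}_2(\mathbb Q,\mathbb Q)$. I would match the homological degree of $\Tor^A$ against the bigrading of $\widetilde H_*(\Z_K;\mathbb Q)$ via the bar-construction relation between loop homology and the homology of $\Z_K$: the term of homological degree $s$ corresponds to the summands $\widetilde H_{s-1}(K_J)$, so that the generators ($s=1$) are counted by $\bigoplus_J\widetilde H_0(K_J)$, recovering~\cite[Theorem~4.3]{g-p-t-w16}, and the relations ($s=2$) by $\bigoplus_J\widetilde H_1(K_J)\cong\bigoplus_j H_{2-j,2j}(\Z_K)$. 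If $A$ is one-relator then $\dim_{\mathbb Q}\Tor^A_2(\mathbb Q,\mathbb Q)=1$, so $\bigoplus_J\widetilde H_1(K_J;\mathbb Q)$ is one-dimensional; the combinatorial analysis of Theorem~\ref{thm} then forces $K=C_p$ or $K=C_p*\Delta^q$, and the integral form of~(b) follows from the vanishing computation below. The main obstacle is making this Tor-to-homology dictionary rigorous, namely justifying the collapse of the relevant bar/Eilenberg--Moore spectral sequence in the bidegree counting relations, so that $\dim_{\mathbb Q}\Tor^A_2(\mathbb Q,\mathbb Q)$ genuinely equals the rank of $\bigoplus_j H_{2-j,2j}(\Z_K)$.

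Finally, the vanishing $H_{-i,2j}(\Z_K)=0$ for $j-i\geqslant 3$ follows once~(c) holds. By~\eqref{BigradedHochster} this group is $\bigoplus_{|J|=j}\widetilde H_{j-i-1}(K_J)$ with $j-i-1\geqslant 2$. For $K=C_p*\Delta^q$ on vertices $I\sqcup W$, any full subcomplex $K_J=(C_p)_{J\cap I}*(\Delta^q)_{J\cap W}$ is contractible whenever $J\cap W\neq\emptyset$, since the join with a nonempty simplex is an iterated cone; and when $J\subseteq I$ it is a full subcomplex of the cycle $C_p$, i.e. either $C_p\simeq S^1$ or a disjoint union of paths. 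In every case $\widetilde H_{\geqslant 2}(K_J)=0$, which gives the claim (and, since the only surviving $\widetilde H_1$ is $\widetilde H_1(K_I)=\mathbb Z$, also re-proves (c)$\Rightarrow$(b)).
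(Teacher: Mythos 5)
Your reductions (b)$\Leftrightarrow$(c) (via the identification $\bigoplus_j H_{2-j,2j}(\Z_K)\cong\bigoplus_{J\subseteq[m]}\widetilde H_1(K_J)\cong H_2(\R_K)$ and Theorem~\ref{thm}), your proof of (c)$\Rightarrow$(a) (McGavran's homeomorphism~\eqref{eq:McGavran}, Proposition~\ref{prop:HopfAlgIso}, $\Z_{C_p\ast\Delta^q}\simeq\Z_{C_p}\times(D^2)^{q+1}\simeq\Z_{C_p}$, and non-freeness from \cite[Theorem~4.6]{g-p-t-w16}), and your final vanishing computation all check out and agree in substance with the paper; the explicit remark that the algebra is genuinely not free is a point the paper leaves implicit, and is a good catch.

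However, your direction (a)$\Rightarrow$(b) has a genuine gap, and it is exactly the step you flag yourself: the claim that $\dim_{\mathbb Q}\Tor^{A}_2(\mathbb Q,\mathbb Q)$ for $A=H_*(\Omega\Z_K;\mathbb Q)$ equals the rank of $\bigoplus_j H_{2-j,2j}(\Z_K;\mathbb Q)$ is asserted, not proved. Making it rigorous requires two nontrivial inputs that are not available in the paper's toolkit: (1) collapse (in the relevant bidegrees) of the bar/Eilenberg--Moore spectral sequence $\Tor^{H_*(\Omega\Z_K;\mathbb Q)}_{s}(\mathbb Q,\mathbb Q)\Rightarrow H_*(\Z_K;\mathbb Q)$ --- differentials $d^r\colon E^r_{s,t}\to E^r_{s-r,t+r-1}$ could a priori kill $\Tor_2$-classes, so one-dimensionality of the target filtration piece does not bound $\Tor_2$ without a collapse argument; and (2) an identification of the Eilenberg--Moore filtration on $H_*(\Z_K;\mathbb Q)$ with the Hochster bigrading of~\eqref{BigradedHochster}, i.e.\ that filtration degree $s$ corresponds precisely to the summands $\widetilde H_{s-1}(K_J)$. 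Neither is automatic; the degree bookkeeping you perform is consistent but is evidence, not proof. There is also a smaller point to be handled: ``one-relator'' in the paper is an integral notion, so you must argue that an integral one-relator presentation forces $\dim_{\mathbb Q}\Tor_2^{A}(\mathbb Q,\mathbb Q)\leqslant 1$ after rationalisation, and separately rule out $\Tor_2=0$ (rational freeness) when $K^1$ is not chordal.

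The paper avoids this machinery entirely: its proof of (a)$\Rightarrow$(c) is geometric and works integrally. Given a chordless cycle $K_I$, it analyses each vertex $j\notin I$ through the pushout decomposition~\eqref{linkstar}, using Lemma~\ref{gtlemma} to split $\Z_{K_{I\cup j}}$ as a wedge involving a half-smash $\Z_{K_I}\rtimes S^1$, the Adams--Hilton computation of Lemma~\ref{lm:HalfSmashGivesRelator} to exhibit two relations in $H_*(\Omega(M\rtimes S^1))$, and the retraction argument of Lemma~\ref{lm:DistinctRetract} to propagate relations from full subcomplexes to $K$. If you want to salvage your route, you would need to prove the Tor-to-Hochster dictionary (e.g.\ via Koszulity of $\mathbb Q[K]$ for flag $K$ and a careful comparison of spectral sequences), which is a substantial piece of work in its own right; as written, the central implication of the theorem remains unproved.
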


To prove the Theorem, we start by showing that if $K$ is a flag complex which is not of the form given in~(c), then either $H_{\ast}(\Omega \Z_K)$ is free, or it has at least two relations. The following result gives a condition for $H_{\ast}(\Omega \Z_K)$ to have at least two relations.

\begin{lemma} \label{lm:DistinctRetract}
Let $K$ be a simplicial complex and suppose that $K_I$ and $K_J$ are distinct full subcomplexes of $K$ such that both $H_{\ast}(\Omega \Z_{K_I})$ and $H_{\ast}(\Omega \Z_{K_J})$ have at least one relation. Then $H_{\ast}(\Omega \Z_K)$ is not a one-relator algebra.
\end{lemma}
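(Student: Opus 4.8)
The plan is to turn the full-subcomplex structure into a statement about the minimal number of relations, using that full subcomplexes give retractions of moment-angle complexes. For a full subcomplex $K_L\subseteq K$ the coordinate projection $(D^2)^m\to(D^2)^L$ restricts to a retraction $\rho_L\colon\Z_K\to\Z_{K_L}$ splitting the inclusion $\iota_L\colon\Z_{K_L}\hookrightarrow\Z_K$; fullness is exactly what guarantees $\rho_L(\Z_K)\subseteq\Z_{K_L}$, since $I\cap L\in K_L$ for every $I\in K$. Looping and passing to homology, $H_*(\Omega\Z_{K_L})$ becomes a retract of the Hopf algebra $H_*(\Omega\Z_K)$: the algebra maps $(\Omega\iota_L)_*$ and $(\Omega\rho_L)_*$ satisfy $(\Omega\rho_L)_*\circ(\Omega\iota_L)_*=\mathrm{id}$. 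First I would record this for $L=I$ and $L=J$.

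Next I would count relations homologically. Passing to $\mathbb{Q}$-coefficients, the minimal number of relations of a connected graded algebra $A$ is $\dim_{\mathbb{Q}}\Tor_2^A(\mathbb{Q},\mathbb{Q})$, so a one-relator algebra has this dimension at most~$1$. Because $\Tor_2(\mathbb{Q},\mathbb{Q})$ is a covariant functor of the algebra, a retract $A\to B\to A$ induces a split injection $\Tor_2^A(\mathbb{Q},\mathbb{Q})\hookrightarrow\Tor_2^B(\mathbb{Q},\mathbb{Q})$. Hence both $\Tor_2^{H_*(\Omega\Z_{K_I})}(\mathbb{Q},\mathbb{Q})$ and $\Tor_2^{H_*(\Omega\Z_{K_J})}(\mathbb{Q},\mathbb{Q})$ are direct summands of $\Tor_2^{H_*(\Omega\Z_K)}(\mathbb{Q},\mathbb{Q})$, and each is nonzero since $K_I$ and $K_J$ carry a relation. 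It then suffices to show that these two summands are linearly independent in $\Tor_2^{H_*(\Omega\Z_K)}(\mathbb{Q},\mathbb{Q})$, for then its dimension is at least~$2$ and $H_*(\Omega\Z_K)$ cannot be one-relator.

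The independence is where I would use the $\mathbb{Z}^m$-multigrading carried by $\Z_K$, and hence by $H_*(\Omega\Z_K)$ and all of its $\Tor$ groups; the retraction $\rho_L$ respects this grading through the zero-padding inclusion $\mathbb{Z}^L\hookrightarrow\mathbb{Z}^m$. Since $K_I$ is not chordal it contains a chordless cycle on some $L\subseteq I$, and by the bigraded Hochster formula~\eqref{BigradedHochster} the associated class in $\widetilde{H}_1(K_L)$ produces a relation in multidegree equal to the indicator vector of $L$ (this is the single relation $\sum[a_i,b_i]$ of Proposition~\ref{prop:HopfAlgIso} when $K_L$ is a $p$-cycle). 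Choosing such chordless cycles $K_L\subseteq K_I$ and $K_{L'}\subseteq K_J$, once $L\neq L'$ the two relations lie in distinct multidegrees, so their images in $\Tor_2^{H_*(\Omega\Z_K)}(\mathbb{Q},\mathbb{Q})$ are independent and the argument closes.

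The main obstacle is exactly to secure $L\neq L'$. A relation attributed to $K_I$ may be supported on a proper full subcomplex $K_L\subsetneq K_I$ — for instance when $K_I$ is a cone on a chordless cycle, the relation is supported on the cycle alone — so the bare hypothesis $I\neq J$ does not by itself force the carrying cycles to differ. The delicate part of the proof is therefore to extract from $K_I\neq K_J$ a genuinely distinct pair of $\widetilde{H}_1$-carrying subcomplexes, equivalently two relations in different multidegrees; this uses the combinatorial structure of $K$ and is the point at which one should invoke the presence of two distinct chordless cycles rather than merely two distinct full subcomplexes with relations. I expect this combinatorial distinctness, and not the formal retraction and $\Tor$ machinery, to be the crux.
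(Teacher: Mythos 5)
Your first two steps are a rigorised version of what the paper actually does: its proof of Lemma~\ref{lm:DistinctRetract} consists of exactly the retraction observation you make (full subcomplexes give $\Z_{K_I}$, $\Z_{K_J}$ as retracts of $\Z_K$, hence $H_*(\Omega\Z_{K_I})$, $H_*(\Omega\Z_{K_J})$ as algebra retracts of $H_*(\Omega\Z_K)$), followed by the single sentence that the induced relations ``are distinct since $K_I$ and $K_J$ are.'' The paper never formalises what ``number of relations'' means; your $\Tor_2^A(\mathbb{Q},\mathbb{Q})$ count, the split injection on $\Tor_2$ from a retract, and the multigraded independence criterion are exactly the machinery needed to make that sentence meaningful, and they are sound.

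The obstacle you flag at the end is not a defect of your write-up but a genuine gap --- indeed, the lemma as literally stated is false, and the paper's closing sentence is where its proof breaks. Take $K=C_4\ast\Delta^0$ on $[5]$ with the cycle on $\{1,2,3,4\}$, and let $I=\{1,2,3,4\}$, $J=[5]$: then $K_I=C_4$ and $K_J=K$ are distinct full subcomplexes, $\Z_{K_I}\cong S^3\times S^3$ and $\Z_{K_J}\simeq S^3\times S^3$, so both loop homology algebras have a relation by Proposition~\ref{prop:HopfAlgIso}, yet $H_*(\Omega\Z_K)$ is a one-relator algebra. (If one insists on proper, incomparable subcomplexes, $K=C_4\ast\Delta^1$ with $I=\{1,2,3,4,5\}$, $J=\{1,2,3,4,6\}$ does the same job.) The point is precisely yours: both hypotheses are witnessed by the \emph{same} chordless cycle, so the two relations coincide in multidegree and distinctness of $I$ and $J$ buys nothing. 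The correct statement, which is all the paper ever uses --- in case~(iv) and in the $j_1,j_2$ step of the proof of Theorem~\ref{thm:pgoniffORA}, $K_I$ and $K_J$ are always two \emph{distinct chordless cycles} --- is that two chordless cycles on distinct vertex sets $L\neq L'$ yield, via~\eqref{BigradedHochster} and Proposition~\ref{prop:HopfAlgIso}, relation classes in the distinct multidegrees $\mathbf{1}_L\neq\mathbf{1}_{L'}$, whence $\dim_{\mathbb{Q}}\Tor_2\geqslant2$ exactly as you argue. So your proposal, with the hypothesis strengthened from ``distinct full subcomplexes with a relation'' to ``relations supported on distinct chordless cycles,'' is complete and is in fact more careful than the paper's own proof; you should not expect to close the gap as stated, because it cannot be closed.
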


\begin{proof}
Note that each of $\Z_{K_I}$ and $\Z_{K_J}$ retracts off $\Z_K$ as $K_I$ and $K_J$ are full subcomplexes. % \cite{bu-pa15}. 
Therefore each of $\Omega \Z_{K_I}$ and $\Omega \Z_{K_J}$ retracts off $\Omega \Z_K$ and we obtain a commutative diagram of algebras
\begin{equation*} \begin{tikzcd}[column sep=3em,row sep=3em]
H_{\ast}(\Omega \Z_{K_I}) \ar[r] \ar[dr,equal] & H_{\ast}(\Omega \Z_K) \ar[d]\\
& H_{\ast}(\Omega \Z_{K_I})
\end{tikzcd} \end{equation*}
and similarly for $K_J$. In particular, each relation of $H_{\ast}(\Omega \Z_{K_I})$ appears as a relation of $H_{\ast}(\Omega \Z_K)$ under the induced inclusion map and similarly for $K_J$, and the induced relations are distinct since $K_I$ and $K_J$ are.
\end{proof}

Let $K$ be a simplicial complex on $[m]$. Suppose that $j \in K$ is a vertex and define the link 
\[
  \lk_K(j) = \{I \in K \mid j \cup I \in K,\, j \notin I \}
\]
and the star
\[
\st_K(j) = \{I \in K \mid j \cup I \in K \} = \lk_K(j) \ast j
\]
 and assume that $\lk_K(j)$ is on the first $l$ vertices of $K$. Decompose $K=\st_K(j) \cup_{\lk_K(j)} K_{[m] \setminus j}$. Then there is a homotopy pushout of moment-angle complexes
\begin{equation} \label{linkstar}
\begin{tikzcd}[column sep=3em,row sep=3em]
  \Z_{\lk_K(j)} \times T^{m-l} \ar[r,"i \times \pi"] \ar[d,"\mathrm{id} \times \pi"'] 
  & \Z_{K_{[m] \setminus j}} \times S^1 \ar[d] \\
  \Z_{\lk_K(j)}   \times T^{m-l-1} \ar[r] & \Z_{K}.
\end{tikzcd} 
\end{equation}

\begin{lemma}\label{gtlemma}
If the map $i \colon  \Z_{\lk_K(j)} \to \Z_{K_{[m] \setminus j}}$ is nullhomotopic, then there is a homotopy equivalence 
\[
\Z_K \simeq \Sigma^2 (\Z_{\lk_K(j)} \times T^{m-l-1}) \vee (\Z_{[m] \setminus j} \rtimes S^1).
\]
Here the half-smash $X\rtimes Y$ of pointed spaces is defined by $X\times Y/(\mathit{pt}\times Y)$.
\end{lemma}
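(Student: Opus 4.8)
The plan is to evaluate the homotopy pushout~\eqref{linkstar} directly, using the hypothesis $i\simeq *$ to break its top leg into a map that factors through a single circle, and then to compute the pushout in two stages via the pasting law for homotopy pushouts. Throughout I would work in the homotopy-invariant setting, so that each map may be replaced by any homotopic one.

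First I would split off the circle that the two projections see. Writing $T^{m-l}=T^{m-l-1}\times S^1$ and setting $Y:=\Z_{\lk_K(j)}\times T^{m-l-1}$, the left map $\mathrm{id}\times\pi$ becomes the projection $Y\times S^1\to Y$, while the top map $i\times\pi$ becomes $(x,t,s)\mapsto(i(x),s)$ into $\Z_{K_{[m]\setminus j}}\times S^1$ (after accounting for the torus factors coming from the vertices outside $\lk_K(j)\cup\{j\}$). Since $i$ is nullhomotopic, this top map is homotopic to $f_0(x,t,s)=(*,s)$, which factors as the composite $Y\times S^1\xrightarrow{\mathrm{pr}}S^1\xrightarrow{\iota}\Z_{K_{[m]\setminus j}}\times S^1$ of the projection onto the circle with the inclusion $\iota$ of $\{*\}\times S^1$. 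As a homotopy pushout depends only on the homotopy classes of its maps, I may replace the top map by $f_0$ without changing $\Z_K$ up to homotopy.

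Next I would apply the pasting law, factoring the span $Y\xleftarrow{\mathrm{pr}}Y\times S^1\xrightarrow{f_0}\Z_{K_{[m]\setminus j}}\times S^1$ through $S^1$. The first pushout, of $Y\xleftarrow{\mathrm{pr}}Y\times S^1\xrightarrow{\mathrm{pr}}S^1$, is by definition the join $Y\ast S^1$; for well-pointed connected $Y$ one has $Y\ast S^1\simeq\Sigma(Y\wedge S^1)\simeq\Sigma^2 Y=\Sigma^2(\Z_{\lk_K(j)}\times T^{m-l-1})$, which is the first wedge summand. The pasting law then presents $\Z_K$ as the second pushout, of $Y\ast S^1\xleftarrow{g}S^1\xrightarrow{\iota}\Z_{K_{[m]\setminus j}}\times S^1$, where $g$ is the canonical map of the $S^1$-leg into the join. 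The key observation is that the inclusion of either factor into a join is nullhomotopic, since it factors through a cone; hence $g\simeq *$.

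Finally, when one leg of a homotopy pushout is nullhomotopic the pushout splits as the wedge of the corresponding corner with the homotopy cofibre of the other leg, giving $\Z_K\simeq(Y\ast S^1)\vee\mathrm{cofib}(\iota)$. Since $\iota$ is the cofibration $\{*\}\times S^1\hookrightarrow\Z_{K_{[m]\setminus j}}\times S^1$, its cofibre is precisely the half-smash $\Z_{K_{[m]\setminus j}}\rtimes S^1$, and combining this with the identification of $Y\ast S^1$ yields the claimed equivalence. I expect the main obstacle to be the careful bookkeeping rather than any single hard step: one must correctly match the maps of~\eqref{linkstar} with the two projections after tracking the torus factors, invoke the pasting law and the splitting principle in the homotopy-invariant (not strict) setting, and verify the well-pointedness and basepoint hypotheses needed both for $Y\ast S^1\simeq\Sigma^2 Y$ and for the final decomposition to be a genuine based wedge.
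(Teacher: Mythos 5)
Your proof is correct, but it takes a genuinely different route from the paper: the paper disposes of this lemma in one line, citing it as a particular case of \cite[Lemma~3.3]{gr-th07}, whereas you reconstruct the splitting from scratch. Your chain of homotopy-pushout manipulations --- replace the top leg of \eqref{linkstar} by its nullhomotopic factorisation through $S^1$, paste the pushout into two stages so that the first stage is the join $Y \ast S^1 \simeq \Sigma(Y\wedge S^1) \simeq \Sigma^2 Y$, observe that the residual leg $S^1 \to Y\ast S^1$ is nullhomotopic (it factors through the cone $\mathrm{pt}\ast S^1$; alternatively, $\Sigma^2 Y$ is simply connected), and then split the second-stage pushout as the wedge of $Y\ast S^1$ with the homotopy cofibre of $\{\ast\}\times S^1 \hookrightarrow \Z_{K_{[m]\setminus j}}\times S^1$, which is precisely the half-smash $\Z_{K_{[m]\setminus j]}}\rtimes S^1$ --- is sound, and each step (homotopy invariance of pushouts, the pasting law, the wedge splitting when one leg is null, cofibre of a product inclusion equals the half-smash) is standard. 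What your version buys is self-containedness within the paper; what the citation buys is generality, since the Grbi\'c--Theriault lemma is proved once for this pushout shape and then applied to arbitrary polyhedral products, not just $(D^2,S^1)$.

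One caveat, which you only partly addressed with your parenthetical about torus factors: the notation $i\times\pi$ in \eqref{linkstar} is shorthand, and the actual top leg is the subcomplex inclusion $\Z_{\lk_K(j)}\times T^{m-l-1}\times S^1 \hookrightarrow \Z_{K_{[m]\setminus j}}\times S^1$, which carries the ghost-vertex torus coordinates into $\Z_{K_{[m]\setminus j}}$ rather than forgetting them. Consequently the nullhomotopy you must invoke --- and the one used in the paper's applications, for instance case (i) in the proof of Theorem~\ref{thm:pgoniffORA}, where ``$i$'' is the torus inclusion $T^p\to\Z_{K_I}$ --- is for the composite $\Z_{\lk_K(j)}\times T^{m-l-1}\to \Z_{K_{[m]\setminus j}}$, not merely for its restriction to $\Z_{\lk_K(j)}$. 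Once that is taken as the hypothesis, your factorisation $f_0=\iota\circ\mathrm{pr}_{S^1}$ and everything after it goes through verbatim; the remaining point-set matters you flag (well-pointedness of the CW complexes, upgrading a free nullhomotopy in a connected target, strictifying the factorisation before pasting) are routine.
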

\begin{proof}
This is a particular case of~\cite[Lemma~3.3]{gr-th07}.
\end{proof}

The following result shows that when $\Z_{[m] \setminus j}$ has the homotopy type of a connected sum of sphere products, $H_{\ast}(\Omega \Z_K)$ is not a one-relator algebra.

\begin{lemma} \label{lm:HalfSmashGivesRelator}
Suppose that $M = \#_{i=1}^k \left( S^{d_i} \times S^{d-d_i} \right)$ where $d_i \geqslant 2$ and $d \geqslant 4$. Then $H_{\ast}(\Omega (M \rtimes S^1))$ is not a one-relator algebra.
\end{lemma}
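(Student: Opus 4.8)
The plan is to compute the Pontryagin algebra $H_{\ast}(\Omega(M\rtimes S^1))$ directly from an Adams--Hilton model, in the spirit of Proposition~\ref{prop:HopfAlgIso}, and to read off that its minimal presentation requires at least two relations. First I would fix a CW structure. Writing $M=L_0\cup_w e^d$ with $L_0=\bigvee_{i=1}^k(S^{d_i}\vee S^{d-d_i})$ and $w$ the sum of Whitehead products from~\eqref{eq:CofibSeq}, the projection $M\rtimes S^1\to M$ and the inclusion $M=M\times\{\mathit{pt}\}\hookrightarrow M\rtimes S^1$ exhibit $M$ as a retract inside the cofibration $M\to M\rtimes S^1\to\Sigma M$; in particular $\Omega M$ is a retract of $\Omega(M\rtimes S^1)$. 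Collapsing the $1$-cell $\mathit{pt}\times S^1$ in the product CW structure on $M\times S^1$ shows that $M\rtimes S^1$ is obtained from $M\vee\Sigma L_0$, where $\Sigma L_0=\bigvee_{i=1}^k(S^{d_i+1}\vee S^{d-d_i+1})$ is contributed by the suspended sphere cells, by attaching a single top cell $e^{d+1}=e^d\times e^1$.

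Next I would write down the Adams--Hilton model as the cobar-type dg algebra
\[
AH_*(M\rtimes S^1)=\bigl(T(a_1,b_1,\ldots,a_k,b_k,\bar a_1,\bar b_1,\ldots,\bar a_k,\bar b_k,z,\bar z),\,d\bigr),
\]
with $\deg a_i=d_i-1$, $\deg b_i=d-d_i-1$ from the cells of $M$, $\deg\bar a_i=d_i$, $\deg\bar b_i=d-d_i$ from the suspended cells, $\deg z=d-1$, $\deg\bar z=d$, and differential $da_i=db_i=d\bar a_i=d\bar b_i=0$, together with $dz=\chi:=\sum_i[a_i,b_i]$ exactly as in Proposition~\ref{prop:HopfAlgIso}, while the generator $\bar z$ dual to the top cell satisfies
\[
d\bar z=\bar\chi:=\sum_{i=1}^{k}\bigl([\bar a_i,b_i]\pm[a_i,\bar b_i]\bigr).
\]
The identification of $d\bar z$ is the crux of the argument, and I expect it to be the main obstacle. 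The top cell $e^{d+1}=e^d\times e^1$ is attached by the image, under collapsing $\mathit{pt}\times S^1$, of the product-cell attaching map of $e^d\times e^1$; the contributions $[w,\iota_1]$ involving the collapsed circle vanish, leaving precisely the Whitehead products pairing an unsuspended sphere class of $M$ with a suspended class in $\Sigma L_0$. Translating these through the cobar differential~\eqref{dcobar}, with signs dictated by comultiplications of the form~\eqref{deltaed}, yields $\bar\chi$; the sign bookkeeping is delicate but does not affect the conclusion.

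Then, adapting the cycle/boundary analysis of Proposition~\ref{prop:HopfAlgIso} to the two acyclic generators $z,\bar z$, I would show that a nonzero element is a cycle exactly when it avoids the two-sided ideal $\langle z,\bar z\rangle$ and is a boundary exactly when it lies in $\langle\chi,\bar\chi\rangle$, giving
\[
H_{\ast}(\Omega(M\rtimes S^1))\cong \frac{T(a_1,b_1,\ldots,a_k,b_k,\bar a_1,\bar b_1,\ldots,\bar a_k,\bar b_k)}{\langle\chi,\bar\chi\rangle}.
\]
The technical point here is to verify that $\chi,\bar\chi$ form an inert (strongly free) sequence, so that attaching the two cells kills exactly these classes and creates no new homology; one checks this directly on the model, as in Proposition~\ref{prop:HopfAlgIso}.

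Finally I would conclude. The element $\chi$ has degree $d-2$ and $\bar\chi$ has degree $d-1$, and both are indecomposable, lying in the square of the augmentation ideal but not expressible through fewer relations. Since the number of relations in a minimal presentation of a connected graded algebra is an invariant and relations in two distinct internal degrees cannot be combined into one, the algebra $H_{\ast}(\Omega(M\rtimes S^1))$ has at least two minimal relations. Hence it is not a one-relator algebra. (The retraction of $\Omega M$ off $\Omega(M\rtimes S^1)$ provides a useful consistency check: the relation $\chi$ of the one-relator algebra $H_{\ast}(\Omega M)$ is recovered in degree $d-2$, and $\bar\chi$ supplies the genuinely new relation in degree $d-1$.)
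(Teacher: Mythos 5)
Your proposal is correct and takes essentially the same route as the paper: the paper likewise uses the quotient CW structure on $M \rtimes S^1$ (the cells $e_i^{d_i}$, $e_i^{d-d_i}$, their suspended companions, and the two cells $e^d$, $e^{d+1}$), writes down the Adams--Hilton model with generators $a_i,b_i,x_i,y_i,z,w$, and computes $d(z)=\sum_i[a_i,b_i]$ and $d(w)=\sum_i\bigl([a_i,y_i]+[x_i,b_i]\bigr)$, concluding that these two boundaries yield two relations. If anything, your closing observation that the two relations sit in the distinct degrees $d-2$ and $d-1$ (so cannot be merged into a single defining relation) makes explicit a point the paper's proof leaves terse.
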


\begin{proof}
As in Proposition~\ref{prop:HopfAlgIso}, we apply the Adams--Hilton model.
A cell structure on $M \rtimes S^1$ is given by the image under the quotient map $M \times S^1 \to M \rtimes S^1$, and therefore consists of cells $e^0$, $e_i^{d_i},e_i^{d-d_i},e_i^{d_i+1},e_i^{d-d_i+1}$, $1 \leqslant i \leqslant k$, along with two cells $e^d$ and $e^{d+1}$. The Adams--Hilton model $AH_*(M \rtimes S^1)$ can be identified with the cobar construction on the coalgebra generated by positive-dimensional cells, in which the differential is zero, $e_i^{d_i},e_i^{d-d_i},e_i^{d_i+1},e_i^{d-d_i+1}$ are primitives, $\Delta e^d$ is given by~\eqref{deltaed} and
\begin{multline*}
  \Delta e^{d+1}=e^{d+1}\otimes 1+1\otimes e^{d+1}+\sum_{i=1}^k
  \bigl( e^{d_i}_i\otimes e^{d-d_i+1}_i+(-1)^{d_i(d-d_i+1)}e^{d-d_i+1}_i\otimes e^{d_i}_i\\
  +(-1)^{d-d_i}e^{d_i+1}_i\otimes e^{d-d_i}_i+(-1)^{d_i(d-d_i)}e^{d-d_i}_i\otimes e^{d_i+1}_i
  \bigr).
\end{multline*}
The Adams--Hilton model is therefore given by
\[
  AH_*(M \rtimes S^1) = \bigl(T(a_1,b_1,x_1,y_1,\ldots,a_k,b_k,x_k,y_k,z,w),d\bigr)
\]
where we set $a_i=(-1)^{d_i}s^{-1}e^{d_i}_i$, $b_i=s^{-1}e^{d-d_i}_i$, 
$x_i=(-1)^{d+1}s^{-1}e^{d_i+1}_i$, $y_i=s^{-1}e^{d-d_i+1}_i$, $z=s^{-1}e^{d}$, $w=s^{-1}e^{d+1}$, so that 
$\deg a_i = d_i - 1$, $\deg b_i = d - d_i-1$, $\deg x_i=d_i$, $\deg y_i=d-d_i$, $\deg z = d-1$, $\deg w=d$. Differential~\eqref{dcobar} is given by $d(a_i) = d(b_i) = d(x_i) = d(y_i) = 0$,
\[ 
  d(z) = \sum_{i=1}^k[a_i,b_i]
\]
and
{\small
\begin{align*}
  d(w) &= \sum_{i=1}^k \bigl( (-1)^{d_i} s^{-1}e^{d_i}_i\otimes s^{-1}e^{d-d_i+1}_i
  +(-1)^{d-d_i+1}(-1)^{d_i(d-d_i+1)}s^{-1}e^{d-d_i+1}_i\otimes s^{-1}e^{d_i}_i\\
  +& (-1)^{d_i+1}(-1)^{d-d_i}s^{-1}e^{d_i+1}_i\otimes s^{-1}e^{d-d_i}_i
  +(-1)^{d-d_i}(-1)^{d_i(d-d_i)}s^{-1}e^{d-d_i}_i\otimes s^{-1}e^{d_i+1}_i\bigr)\\
 & = \sum_{i=1}^k\bigl(a_i\otimes y_i+(-1)^{(d_i+1)(d-d_i+1)+d_i}y_i\otimes a_i
  +x_i\otimes b_i+(-1)^{d_i(d-d_i+1)+1}b_i\otimes x_i\bigr)\\
 & =\sum_{i=1}^k\bigl([a_i,y_i]+[x_i,b_i]\bigr).
\end{align*}}%
Therefore any element in $\langle d(z) \rangle$ or $\langle d(w) \rangle$ is trivial in homology since it is a boundary. This induces two relations in $H_{\ast}(\Omega (M \rtimes S^1))$, as claimed.
\end{proof}

\begin{proof}[Proof of Theorem \ref{thm:pgoniffORA}]
(c) $\Rightarrow$ (a).
Suppose that $K=C_p$ or $K=C_p \ast \Delta^q$ for $p \geqslant 4$, $q \geqslant 0$. Since $\Z_K$ is homotopy equivalent to a connected sum of sphere products~\eqref{eq:McGavran},
the implication follows from Proposition~\ref{prop:HopfAlgIso}.

\smallskip

(a) $\Rightarrow$ (c). Suppose that $K$ is a flag complex on $[m]$ such that $H_{\ast}(\Omega \Z_K)$ is a one-relator algebra. If $K^{1}$ is a chordal graph, then $\Z_K$ has the homotopy type of a wedge of spheres \cite[Theorem~4.6]{g-p-t-w16}, and thus $H_{\ast}(\Omega \Z_K)$ is a graded free associative algebra, which is a contradiction.

Therefore assume that $K^{1}$ is not chordal. In particular, there exists a set of vertices $I \subseteq [m]$ such that the full subcomplex $K_I$ is a $p$-cycle, and we enumerate $I=\{b_1,b_2,\ldots,b_p\}$. If $I = [m]$, that is $K = C_p$, then $H_{\ast}(\Omega \Z_K)$ is a one-relator algebra by Proposition~\ref{prop:HopfAlgIso}. 

Assume that $[m] \setminus I \ne \emptyset$. First, we show that each $j \in [m] \setminus I$ is connected to each vertex in $I$. Consider the full subcomplex $K_{I \cup j}$ of $K$, and observe that 
\[
K_{I \cup j} = K_I \cup_{\lk_{I \cup j} (j)} \st_{I \cup j}(j).
\]
Suppose that $K_{I \cup j} \ne K_I \ast j$. Since $K$ is flag, there exists $b_l \in I$ such that there is no edge from $j$ to $b_l$. Form the sequence of adjacent vertices $b_{l+1}, b_{l+2}, \ldots, b_{l+n_1}$, with the convention that $b_{p+1} = b_1$, where $n_1 \geqslant 1$ is the smallest index such that there is an edge from $j$ to $b_{l+n_1}$. Similarly, form sequence of adjacent vertices $b_{l-1}, b_{l-2}, \ldots, b_{l-n_2}$, where $n_2 \geqslant 1$ is again the smallest index such that there is an edge from $j$ to $b_{l-n_2}$. We consider four cases.

(i) Assume that there are no indices $n_1$ and $n_2$ as described above. In this case, there are no edges between $j$ and any vertex in $I$, and so $\lk_{I \cup j}(j) = \emptyset$. Then~\eqref{linkstar} takes the form
\begin{equation*} 
\begin{tikzcd}[column sep=3em,row sep=3em]
  T^{p+1} \ar[r,"i \times \mathrm{id}"] \ar[d,"\pi"'] & \Z_{K_I} \times S^1 \ar[d] \\
  T^p \ar[r] & \Z_{K_{I \cup j}}
\end{tikzcd} \end{equation*}
where the map $i\colon T^p\to \mathcal Z_{K_I}$ is nullhomotopic and therefore $\Z_{K_{I \cup j}} \simeq \Sigma^2 T^p \vee (Z_{K_I} \rtimes S^1)$ by Lemma~\ref{gtlemma}. Since $\mathcal Z_{K_I}$ is homeomorphic to a connected sum of sphere products, Lemma~\ref{lm:HalfSmashGivesRelator} gives that $H_{\ast}(\Omega (\Z_{K_I} \rtimes S^1))$ is not a one-relator algebra, and hence neither is $H_{\ast} \left( \Omega \Z_{K_{I \cup j}} \right)$. 
    
(ii) If $b_{l+n_1} = b_{l-n_2}$, then $\lk_{I \cup j} (j) = b_{l+n_1}$, and $\Z_{K_{I \cup j}} \simeq \Sigma^2 T^{p-1} \vee (Z_{K_I} \rtimes S^1)$ by Lemma~\ref{gtlemma}. Thus $H_{\ast} \left(\Omega \Z_{K_{I \cup j}} \right)$ is not a one-relator algebra.

(iii) When $b_{l+n_1}$ and $b_{l-n_2}$ are adjacent in $K_I$, the link $\lk_{I \cup j}(j) = \{(b_{l+n_1}, b_{l-n_2})\}$, and so $\Z_{K_{I \cup j}} \simeq \Sigma^2 T^{p-2} \vee (Z_{K_I} \rtimes S^1)$ by Lemma~\ref{gtlemma}. Thus $H_{\ast} \left(\Omega \Z_{K_{I \cup j}} \right)$ is not a one-relator algebra.

(iv) Finally, let $b_{l+n_1}$ and $b_{l-n_2}$ be distinct and not adjacent in $K_I$. Then by construction the full subcomplex $K_{\{j, b_{l-n_2}, \ldots, b_{l-1}, b_l, b_{l+1}, \ldots, b_{l+n_1} \}}$ of $K$ is a $(n_1 + n_2 + 2)$-cycle, which is distinct from $K_I$. Therefore by Lemma~\ref{lm:DistinctRetract}, $H_{\ast} \left(\Omega \Z_{K_{I \cup j}} \right)$ is not a one-relator algebra.

In all of the above cases, since the full subcomplex $K_{I \cup j}$ retracts off $K$ and $H_{\ast} \left(\Omega \Z_{K_{I \cup j}} \right)$ is not a one-relator algebra, then neither is $H_{\ast}(\Omega \Z_K)$. This is a contradiction. We therefore conclude that $j$ is connected to each vertex in $K_I$ and therefore $K_{I \cup j} = K_I \ast j$.

Second, we show that if $j_1,j_2 \in [m] \setminus I$, then $j_1$ and $j_2$ are connected by an edge. If not, since both $j_1$ and $j_2$ are connected to each vertex in $I$, then the full subcomplex $K_{\{j_1,b_{i_1},j_2,b_{i_3}\}}$ is a $4$-cycle distinct from the $p$-cycle $K_I$. Therefore, Lemma~\ref{lm:DistinctRetract} implies that since $H_{\ast}\left(\Omega \Z_{I \cup \{j_1,j_2\}} \right)$ is not a one-relator algebra, neither is $H_{\ast}(\Omega \Z_K)$, which is a contradiction.

Therefore any vertex in $[m] \setminus I$ is connected to every vertex in $I$ and to every other vertex in $[m] \setminus I$. Since $K$ is flag, $K = K_I \ast \Delta^q$ for some $q \geqslant 0$.

\smallskip

(c) $\Rightarrow$ (b). Suppose that $K=C_p$ or $K=C_p*\Delta^q$ for $p\geqslant 4$ and $q\geqslant 0$.  Let the $p$-cycle $C_p$ of $K$ be supported on the set of vertices $I = \{b_1,\dots,b_p\}$. By~\eqref{BigradedHochster},
\begin{equation}\label{bh2}
  H_{2-j,2j}(\Z_K) \cong \bigoplus_{J \subseteq [m],\, |J|=j} \widetilde{H}_{1}(K_J).
\end{equation}
Since $K_I$ is a $p$-cycle, we have $\widetilde{H}_{1}(K_I) = \mathbb{Z}$. Because any subcomplex $K_J$ with $J \not= I$ is contractible, $\widetilde{H}_{1}(K_J) = 0$ for $J \not= I$.
It follows that $H_{2-p,2p}(\Z_K) = \mathbb{Z}$ and $H_{2-j,2j}(\Z_K)=0$ for $j\ne p$.

\smallskip

(b) $\Rightarrow$ (c).  Suppose that $H_{2-j,2j}(\Z_K)$ is as described in~(b). Then only one summand on the right hand side of~\eqref{bh2} is $\mathbb{Z}$, and all other summands are zero. The same argument as in the proof of implication (b)~$\Rightarrow$~(c) of Theorem~\ref{thm} shows that $K =C_p*\Delta^q$ for some $q \geqslant 0$.

\smallskip

It remains to prove that if $K=C_p$ or $K =C_p*\Delta^q$ for $q \geqslant 0$ then $H_{-i,2j}(\Z_K)=0$ for $j-i\geqslant 3$. 
Considering bigraded decomposition~\eqref{BigradedHochster},
\[
  H_{-i,2j}(\Z_K) \cong \bigoplus_{J \subseteq [m] ,\, |J|=j} \widetilde{H}_{j-i-1}(K_J).
\]
We claim that all summands on the right hand side with $j-i\geqslant 3$ are equal to~0. Indeed, let $I = \{b_1,\dots,b_p\}$ be the set of vertices of $K$ forming a $p$-cycle, $p \geqslant 4$. Then 
$\widetilde{H}_{j-i-1}(K_I) = 0$ for $j-i\geqslant 3$. Since any full subcomplex 
$K_J$ with $J\ne I$ is contractible, we get $\widetilde{H}_{j-i-1}(K_J) = 0$. Hence, $H_{-i,2j}(\Z_K)=0$ for $j-i\geqslant 3$.
\end{proof}

The following examples illustrate Theorem~\ref{thm:pgoniffORA}.

\begin{example}\label{exampleZK}\
	
\noindent\textbf{1.} Let $K$ be the flag complex in Figure~\ref{fig}~(a). Generator set~\eqref{MACgenerators} for $H_{\ast}(\Omega \Z_K)$ is
\[
    [u_3,u_1], ~[u_4,u_2], ~[u_5,u_4], ~[u_2,[u_5,u_4]].
\]
These satisfy the relations
\[
    [u_3,u_1][u_4,u_2]-[u_4,u_2][u_3,u_1]=0, \quad [u_3,u_1][u_5,u_4]-[u_5,u_4][u_3,u_1]=0
\]
and
\[
    [u_3,u_1][u_2,[u_5,u_4]] - [u_2,[u_5,u_4]][u_3,u_1] = 0
\]
which are derived by using the commutativity relations given in~\eqref{DJ_Kalg}. 
By formula~\eqref{BigradedHochster} we obtain $H_{-2,8}(\Z_K) = \mathbb{Z}^2$ and $H_{-3,10}(\Z_K) = \mathbb{Z}$. Hence, the homological condition of Theorem~\ref{thm:pgoniffORA}~(b) is not satisfied.

\smallskip

\noindent\textbf{2.} Let $K$ be the flag complex in Figure~\ref{fig}~(b). Generator set~\eqref{MACgenerators} for $H_{\ast}(\Omega \Z_K)$ is
\[
    [u_3,u_1], ~[u_4,u_2]
\]
with a single relation $[u_3,u_1][u_4,u_2]-[u_3,u_1][u_4,u_2]=0$. Here $H_{\ast}(\Omega \Z_K)$ is a one-relator algebra, and formula~\eqref{BigradedHochster} gives $H_{-2,8}(\Z_K) = \mathbb{Z}$.

\end{example}

Recall that a simplicial complex $K$ is \emph{Golod} if all cup products and higher Massey products vanish in $H^*(\Z_K)$. A simplicial complex $K$ is \emph{minimally non-Golod} if $K$ is not Golod itself, but for every vertex $\{i\}\in K$ the deletion subcomplex ${K-\{i\}}=K_{[m]\setminus\{i\}}$ is Golod. In the flag case, the properties of $H_*(\Omega\Z_K)$ being a one-relator algebra and $K$ being minimally non-Golod are related as follows.

\begin{proposition}\label{flagmng}
Let $K$ be a flag simplicial complex on~$[m]$. The following conditions are equivalent:
\begin{itemize}
\item[(a)] $\Z_K$ is homotopy equivalent to a connected sum of sphere products, with two spheres in each product;	
\item[(b)] $H_{\ast}(\Omega \Z_K)$ is a one-relator algebra;
\item[(c)] $K$ is minimally non-Golod, or $K = L \ast \Delta^q$ where $L$ is a minimally non-Golod complex and $q \geqslant 0$.
\end{itemize}
\end{proposition}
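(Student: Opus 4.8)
The plan is to establish the equivalences via the cycle of implications (a) $\Rightarrow$ (b) $\Rightarrow$ (c) $\Rightarrow$ (a), drawing heavily on the combinatorial characterisation already obtained in Theorem~\ref{thm:pgoniffORA}. The key observation is that condition~(b) is, by Theorem~\ref{thm:pgoniffORA}, equivalent to the combinatorial condition that $K=C_p$ or $K=C_p*\Delta^q$ for $p\geqslant 4$ and $q\geqslant 0$, so the entire proposition reduces to identifying this same class of complexes with (a) and (c).

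For (a) $\Rightarrow$ (b), if $\Z_K\simeq \#_{i=1}^k(S^{d_i}\times S^{d-d_i})$ is a connected sum of products of two spheres each, then Proposition~\ref{prop:HopfAlgIso} gives directly that $H_*(\Omega\Z_K)$ is the quotient of a free associative algebra by the single relation $\sum_i[a_i,b_i]$, hence a one-relator algebra. For (b) $\Rightarrow$ (a), I would invoke Theorem~\ref{thm:pgoniffORA} to deduce that $K=C_p$ or $K=C_p*\Delta^q$, and then use McGavran's homeomorphism~\eqref{eq:McGavran} together with the fact that $\R_{K*\Delta^q}=\R_K\times D^{q+1}$ (and its moment-angle analogue $\Z_{K*\Delta^q}\cong\Z_K\times(D^2)^{q+1}\simeq\Z_K$, since $\Z_{\Delta^q}$ is contractible) to conclude that $\Z_K$ is homotopy equivalent to a connected sum of sphere products, each product having exactly two sphere factors. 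The two-sphere condition is precisely what~\eqref{eq:McGavran} guarantees.

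The heart of the argument, and the main obstacle, is the equivalence of (c) with the combinatorial condition $K=C_p$ or $K=C_p*\Delta^q$. I would prove this by showing that a flag complex is minimally non-Golod if and only if it is a $p$-cycle $C_p$ with $p\geqslant 4$. One direction uses that $C_p$ is not chordal (for $p\geqslant 4$), hence not Golod by~\cite[Theorem~4.6]{g-p-t-w16}, while every deletion $C_p-\{i\}$ is a path, whose $1$-skeleton is chordal, hence Golod. For the converse, if a flag complex $K$ is minimally non-Golod then $K$ is not itself chordal (else it would be Golod), so $K^1$ contains a chordless cycle of length $p\geqslant 4$, giving a full subcomplex $K_I=C_p$; one then argues, as in the proof of (a)~$\Rightarrow$~(c) in Theorem~\ref{thm:pgoniffORA}, that any additional vertex must be a cone vertex (connected to everything), for otherwise deleting a suitable vertex would leave a non-Golod deletion subcomplex, contradicting minimal non-Golodness. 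The join factor $\Delta^q$ is handled by the standard fact that $L*\Delta^q$ is minimally non-Golod precisely when $L$ is, since coning by a simplex does not affect the reduced cohomology of full subcomplexes that carry the Massey-product obstructions.

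The cleanest route, therefore, is to reduce everything to Theorem~\ref{thm:pgoniffORA}: having shown (a) $\Leftrightarrow$ (b) above, it suffices to prove the purely combinatorial statement that a flag complex $K$ satisfies condition~(c) if and only if $K=C_p$ or $K=C_p*\Delta^q$ for $p\geqslant 4$, $q\geqslant 0$. I would isolate this as the technical core, relying on the chordal/Golod dictionary of~\cite{g-p-t-w16} for the "bare" minimally non-Golod case and on the join behaviour of the Golod property for the coned case. The subtle point to handle carefully is verifying that when $K=C_p*\Delta^q$ with $q\geqslant 1$, the complex $K$ itself is \emph{not} minimally non-Golod (since deleting a cone vertex $i\in\Delta^q$ yields $C_p*\Delta^{q-1}$, which is still non-Golod), which is exactly why condition~(c) is phrased as the disjunction "minimally non-Golod, \emph{or} $K=L*\Delta^q$ with $L$ minimally non-Golod."
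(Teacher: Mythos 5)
Your overall architecture coincides with the paper's: (a) $\Rightarrow$ (b) via Proposition~\ref{prop:HopfAlgIso}, reduction of everything else to the combinatorial condition of Theorem~\ref{thm:pgoniffORA}, and the key lemma that a flag complex is minimally non-Golod if and only if it is a $p$-cycle with $p\geqslant 4$. Your (b) $\Rightarrow$ (a) via McGavran's formula~\eqref{eq:McGavran} together with $\Z_{C_p\ast\Delta^q}\cong\Z_{C_p}\times(D^2)^{q+1}\simeq\Z_{C_p}$ is a harmless rearrangement of the paper's cycle of implications, and the forward half of your lemma ($C_p$ is not Golod since it is not chordal, while every deletion is a path, hence chordal, hence Golod by \cite[Theorem~4.6]{g-p-t-w16}) is exactly the paper's justification of (b) $\Rightarrow$ (c).

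The converse half of your lemma, however, contains a genuine error. The ``standard fact'' that $L\ast\Delta^q$ is minimally non-Golod precisely when $L$ is, is false: if $L$ is not Golod, then deleting a cone vertex of $L\ast\Delta^q$ yields $L\ast\Delta^{q-1}$ (read as $L$ when $q=0$), which is still not Golod, so $L\ast\Delta^q$ is \emph{never} minimally non-Golod --- exactly as you yourself observe in your final paragraph (and there the restriction $q\geqslant1$ is unnecessary; the same holds for $q=0$). This makes your proposal internally inconsistent: with the false fact in place, your converse argument only reaches $K=C_p\ast\Delta^q$ and then wrongly certifies such joins as minimally non-Golod, so the characterisation ``flag minimally non-Golod iff $C_p$'' that you need would fail. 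The repair is the paper's direct argument, which is simpler than your cone-vertex detour: if $K$ is flag and minimally non-Golod, its $1$-skeleton contains a chordless cycle on a vertex set $I$, and for \emph{any} vertex $j\notin I$, cone vertex or not, the deletion $K-\{j\}$ still contains $K_I=C_p$ as a full subcomplex, so its $1$-skeleton is non-chordal and $K-\{j\}$ is not Golod by \cite[Theorem~4.6]{g-p-t-w16}, contradicting minimal non-Golodness at once; hence no vertex lies outside $I$ and $K=C_p$ on the nose. Importing the (a) $\Rightarrow$ (c) argument of Theorem~\ref{thm:pgoniffORA} is also not appropriate here: that argument runs through loop-space retractions and half-smashes (Lemmas~\ref{lm:DistinctRetract}--\ref{lm:HalfSmashGivesRelator}) and concerns the one-relator property of $H_*(\Omega\Z_K)$, not Golodness. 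With the corrected lemma, your reduction of (c) to the combinatorial condition goes through, since a flag $K$ satisfying (c) is either $C_p$ or $L\ast\Delta^q$ with $L$ a flag (full subcomplex) minimally non-Golod complex, forcing $L=C_p$.
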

\begin{proof}
(a) $\Rightarrow$ (b). This follows from Proposition~\ref{prop:HopfAlgIso}.

\smallskip

(b) $\Rightarrow$ (c). This follows from Theorem~\ref{thm:pgoniffORA}, because a $p$-cycle $C_p$ with $p\ge4$ is a minimally non-Golod complex.

\smallskip

(c) $\Rightarrow$ (a). This follows from the fact that if $K$ is minimally non-Golod and flag, then $K=C_p$ with $p\geqslant4$. Indeed, let $K^1$ be one-skeleton of~$K$. If $K^{1}$ is a chordal graph, then $\Z_K$ has the homotopy type of a wedge of spheres~\cite[Theorem~4.6]{g-p-t-w16}, so $K$ is Golod and therefore not minimally non-Golod. Hence, $K$ contains a chordless cycle $C_p$ with $p\geqslant4$. If $\{i\}\in K$ is a vertex not in~$C_p$, then $K-\{i\}$ still contains a chordless cycle and therefore is not Golod. Therefore $K = C_p$, and the result follows from formula~\eqref{eq:McGavran}.
\end{proof}

In the non-flag case the three properties in Proposition~\ref{flagmng} are all different. The implication (a) $\Rightarrow$ (c) holds in the non-flag case by a result of Amelotte~\cite[Theorem~1.2]{amel}, and (a) $\Rightarrow$ (b) is Proposition~\ref{prop:HopfAlgIso}. We illustrate the failure of other implications in the next example.

\begin{example}\sloppy
\begin{figure}[h]
    \centering
    \begin{tikzpicture}
        \coordinate [label=left:$1$] (1) at (0,2);
        \coordinate [label=right:$2$] (2) at (2,2);
        \coordinate [label=right:$3$] (3) at (2,0);
        \coordinate [label=left:$4$] (4) at (0,0);
        \coordinate [label = below: $5$] (5) at (1,1);
        \filldraw[fill=black!20] (2) -- (5) -- (1);
        \filldraw[fill=black!20] (2) -- (5) -- (3);
        \filldraw[fill=black!20] (4) -- (5) -- (3);
        \draw (2) -- (1) -- (4) -- (3) -- (2);
        \foreach \point in {1,2,3,4,5}
            \fill [black] (\point) circle (1.5 pt);
        \coordinate [label = below: $5$] (5) at (1,1);
        \end{tikzpicture}
   \caption{}
    \label{fig2}
\end{figure}
Let $K$ be the simplicial complex in Figure~\ref{fig2}. A calculation similar to Proposition~\ref{prop:HopfAlgIso} using a cellular chain complex for $\Z_K$ shows that $H_{\ast}(\Omega \Z_K)$ is a one-relator algebra given by
\[
  H_*(\Omega\Z_K)\cong\frac{T(a_{13},a_{24},b_{145},b_{1452},b_{1453},b_{14523})}
  {\langle[a_{13},a_{24}]\rangle}.
\]
Moreover, a calculation similar to~\cite[Example~8.4.5]{bu-pa15} shows that the homomorphism $H_*(\Omega\Z_K) \longrightarrow H_*(\Omega DJ_K)$ from~\eqref{haes} maps the generators $a_{13}$, $a_{24}$, $b_{145}$, $b_{1452}$, $b_{1453}$, $b_{14523}$ to the commutators $[u_1,u_3]$, $[u_2,u_4]$, $[u_1,u_4,u_5]$, $[[u_1,u_4,u_5],u_2]$, 
$[[u_1,u_4,u_5],u_3]$, $[[[u_1,u_4,u_5],u_2],u_3]$, respectively, where $[u_1,u_4,u_5]$ is the higher bracket corresponding to the missing face $\{145\}$.

Observe that $K$ is not minimally non-Golod, as $K-\{5\}$ is a $4$-cycle, so implication (b)~$\Rightarrow$~(c) of Proposition~\ref{flagmng} fails in the non-flag case. Furthemore,  (b)~$\Rightarrow$~(a) also fails here, which is seen from the isomorphism of the cohomology ring of~$\Z_K$ with that of $(S^3 \times S^3) \vee S^5 \vee S^6 \vee S^6 \vee S^7$.
\end{example}

The implication (c) $\Rightarrow$ (a) also fails in the non-flag case. Examples of minimally non-Golod complexes $K$ for which $\Z_K$ is not homotopy equivalent to a connected sum of sphere products were constructed by Limonchenko in~\cite[Theorem~2.6]{limo15}.

\end{document}